\newtheorem{ex}[theorem]{Example}
\newtheorem{remark}[theorem]{Remark}
\newcommand{\TimesFont}{\RequirePackage{times}\RequirePackage[scaled=0.92]{helvet}}
\newcommand{\cA}{\mathcal{A}}
\newcommand{\cC}{\mathcal{C}}
\newcommand{\cD}{\mathcal{D}}
\newcommand{\cF}{\mathcal{F}}
\newcommand{\cH}{\mathcal{H}}
\newcommand{\cN}{\mathcal{N}}
\newcommand{\cO}{\mathcal{O}}
\newcommand{\cR}{\mathcal{R}}
\newcommand{\cU}{\mathcal{U}}
\newcommand{\cV}{\mathcal{V}}
\newcommand{\E}{\mathbb{E}}
\newcommand{\N}{\mathbb{N}}
\newcommand{\R}{\mathbb{R}}
\definecolor{darkred}{RGB}{139,0,0}
\definecolor{darkgreen}{RGB}{0,100,0}
\definecolor{darkmagenta}{RGB}{139,0,139}
\definecolor{darkpurple}{RGB}{110,0,180}
\definecolor{darkblue}{RGB}{40,0,200}
\definecolor{darkorange}{RGB}{255,140,0}
\begin{document}\selectlanguage{english}

\title{Fokker--Planck particle systems for Bayesian inference: Computational approaches}
\author{Sebastian Reich\thanks{Universit\"at Potsdam, 
Institut f\"ur Mathematik, Karl-Liebknecht-Str. 24/25, D-14476 Potsdam, Germany ({\tt sebastian.reich@uni-potsdam.de})}  
\and Simon Weissmann\thanks{Universit\"at Heidelberg,
Interdisziplin\"ares Zentrum f\"ur Wissenschaftliches Rechnen, D-69120 Heidelberg, Germany ({\tt simon.weissmann@uni-heidelberg.de})}}

\maketitle

\begin{abstract} 
Bayesian inference  can be embedded into an appropriately defined dynamics in the space of probability measures.
In this paper, we take Brownian motion and its associated Fokker--Planck equation as a starting point for such embeddings and
explore several interacting particle approximations. More specifically, we consider both deterministic and stochastic interacting particle 
systems and combine them with the idea of preconditioning by the empirical covariance matrix. 
In addition to leading to affine invariant formulations which asymptotically speed up convergence, preconditioning 
allows for gradient-free implementations in the spirit of the ensemble Kalman filter. While such gradient-free 
implementations have been demonstrated to work well for posterior measures that are nearly Gaussian, we extend their scope
of applicability to multimodal measures by introducing localised gradient-free approximations.  
Numerical results demonstrate the effectiveness of the considered methodologies.
\end{abstract}

\noindent {\footnotesize
	{\bf Keywords.} Bayesian inverse problems, Fokker--Planck equation, gradient flow, affine invariance, gradient-free sampling methods, localisation
\\
	\noindent {\bf AMS(MOS) subject classifications.} 65N21, 62F15, 65N75, 65C30, 90C56
}

\section{Introduction}

In this paper, we address the problem of how to convert samples from a prior distribution into samples from the posterior
distribution \cite{Kaipio:1338003,stuart_2010} using appropriately defined evolution equations, that is, 
interacting particle systems either in deterministic or stochastic form. Generally speaking, the desired evolution equations can be formulated
such that the transformation from prior to posterior is achieved either over a fixed time interval or, alternatively, the posterior measure 
is approached asymptotically as time goes to infinity. The former approach is linked to coupling of measures ideas and has, for example, 
been explored from a computational perspective in \cite{sr:daum11,sr:reich10,sw:M16}. The latter approach is typically based on stochastic differential equations
and their associated Fokker--Planck equations with the property that the desired posterior measure is invariant under the given
stochastic process \cite{sr:P14}. More specifically, Brownian dynamics leads to a gradient flow in the space of probability measures \cite{sr:JKO98}, which we denote here as Wasserstein dynamics. It is well-known that Wasserstein dynamics minimises the Kullback--Leibler divergence
between the temporal particle distribution and the desired invariant measure. The gradient flow structure of Wasserstein dynamics has been 
explored numerically, for example, in \cite{sr:CCP19,SR2018,SPSR2019}. The key idea is to define the interacting particle systems such 
that a discrete gradient flow structure is maintained. We note that the idea of minimising the Kullback--Leibler divergence arises also in sampling methods via measure transport maps \cite{sw:M16}, where the aim is to construct a deterministic coupling between a probability measure of interest and a simpler reference measure.

Once a gradient flow structure  has been identified (either at the continuum or discretised level), 
one can think about modifications that alter the gradient dynamics without changing the invariant measure. 
In the classical problem of minimising a cost function $V(x)$, this idea corresponds to considering the preconditioned gradient descent dynamics
\begin{equation} \label{eq:GD}
\frac{{\rm d} x}{{\rm d}t} = -B \nabla_x V(x),
\end{equation}
where $B$ is a symmetric positive-definite matrix. In the context of the interacting particle systems considered in this paper, the 
empirical covariance matrix $P_t^{xx}$ of the ensemble of particles emerges as a natural analog to 
the preconditioning matrix $B$ in (\ref{eq:GD}) \cite{AGFHWLAS2019,SPSR2019}. 
See also \cite{sr:GC11,sr:LMW18} for related ideas in the context of Markov chain Monte Carlo
methods and \cite{NKAS2018} in the context of minimisation. 
The preconditioning by $P_t^{xx}$ gives rise to a modified gradient flow structure in the limit of infinitely many particles 
which has been investigated in \cite{AGFHWLAS2019} under the notion of Kalman--Wasserstein gradient flows. The numerical implementation of
Kalman--Wasserstein dynamics have been explored in \cite{AGFHWLAS2019,sr:NR19} 
using appropriate modifications to standard Brownian dynamics. However, as already indicated in \cite{SPSR2019}, the
same preconditioning can be applied to the deterministic interacting particle systems proposed in \cite{sr:CCP19,SR2018} 
and a first contribution of this paper is to put the resulting deterministic and stochastic interacting particle systems within a common 
mathematical framework. We also demonstrate that preconditioning by $P_t^{xx}$ leads to an affine invariant particle dynamics; a property
which has been shown to be important for Markov chain Monte Carlo methods in \cite{sr:GW10}.

Combining preconditioning by $P_t^{xx}$ with gradient-free formulations of the ensemble Kalman filter \cite{sr:evensen,sr:stuart15,sr:reichcotter15}, 
one can derive gradient-free implementations of the interacting particle systems arising from Kalman--Wasserstein dynamics \cite{AGFHWLAS2019}. 
These formulations are no longer of gradient flow structure but have nevertheless been demonstrated to be computationally robust in 
\cite{AGFHWLAS2019} for posterior distributions which are unimodal and close to Gaussian. 
In this paper, we push this approach further by considering localised approximations $P^{xx}_t(X_t^{(i)})$ which only take into account particles in
the vicinity of a given particle $X_t^{(i)}$ \cite{sr:LMW18}. These localised covariance matrices in turn allow for a local approximation of derivatives 
and hence facilitate the application of the resulting gradient-free formulations to multimodal distributions. We demonstrate the effectiveness of these
approximations by means a numerical example with a multimodal posterior distribution. 

In summary, the key contributions of this paper are to (i) provide a unifying framework for preconditioned interacting particle systems
approximating gradient flow structures in the space of probability measures with application to Bayesian inference problems (BIPs) 
and to (ii) introduce localised covariance matrices for Kalman--Wasserstein dynamics that achieve an efficient preconditioning for multimodal 
distributions also facilitating gradient-free implementations.

The remainder of this paper is organised as follows. The considered class of BIPs is specified mathematically
in Section \ref{sec:prob_form} together with a general framework (\ref{eq:gradient_methods}) 
for formulating deterministic as well as stochastic interacting particle
methods. We also summarise the required background on Brownian dynamics and its associated Fokker--Planck equation. The actual
computational implementations of deterministic as well as stochastic interacting particle systems within (\ref{eq:gradient_methods}) 
are presented in Section \ref{sec:computational} and, more precisely, in Sections \ref{sec:FP} (deterministic) 
and \ref{sec:LD} (stochastic), respectively. The generalisation of
the deterministic Wasserstein formulation (\ref{eq:Liouville})--(\ref{eq:RVF}) 
to  Kalman--Wasserstein flows is introduced in Section \ref{sec:KW} together with the novel localised 
preconditioning approach. We also demonstrate the important property of affine invariance \cite{sr:GW10} of the resulting preconditioned
gradient dynamics. The necessary modifications to the associated stochastic interacting particle system formulations with multiplicative noise, 
as first discussed in \cite{sr:NR19}, can be found in Section \ref{sec:LD}. Numerical results are
presented in Section \ref{sec:numerics}. The paper concludes with a summary in Section \ref{sec:conclusions}.


\section{Mathematical problem formulation} \label{sec:prob_form}

We consider the inverse problem of recovering a random variable $X\in\R^{N_x}$ from observations $y\in\R^{N_y}$ which obey
the following forward model:
\begin{equation}\label{eq:IP}
Y = h(X)+\Xi,
\end{equation}
where $h:\R^{N_x}\to\R^{N_y}$ denotes some nonlinear forward map and the mean zero $N_y$-valued Gaussian random variable $\Xi$
represents measurement errors with positive definite error covariance matrix $R \in \mathbb{R}^{N_y\times N_y}$. 

We employ the Bayesian approach to inverse problems and view $(X,Y)$ 
as a jointly varying random variable on $\R^{N_x} \times \R^{N_y}$ with the marginal in $X$ given by the prior distribution $\pi_0$. We assume 
that $\Xi$  and $X$ are independent. Then, by Bayes' Theorem, the solution to the BIP is a $\R^{N_x}$-valued random variable 
$X\mid y\sim \pi(\cdot\, |y)$, where 
\begin{equation} \label{eq:posterior}
\pi({\rm d}x|y)=\frac{1}{C}\exp(-\Phi(x ; y))\,\pi_0({\rm d}x)
\end{equation}
with $C>0$ a normalisation constant such that $\pi({\rm d}x|y)$ is a probability measure, that is,
\begin{equation}
C:=\int_{\R^{N_x}}\exp(-\Phi(x;y))\,\pi_0({\rm d}x),
\end{equation}
and with $\Phi$ the least-squares misfit function
\begin{equation} \label{eq:misfit}
\Phi(x ; y)=\frac{1}{2}\lVert R^{-\frac{1}{2}}(y-h(x)) \rVert^2=:\frac{1}{2}\lVert y-h(x) \rVert_{R}^2.
\end{equation}
We assume that the posterior distribution (\ref{eq:posterior}) can be represented by a probability density function (PDF) w.r.t.~the Lebesgue 
measure on $\R^{N_x}$, which we denote by $\pi^\ast (x)$, that is, 
\begin{equation}
\pi({\rm d}x|y) = \pi^\ast(x)\,{\rm d}x.
\end{equation}
In the case of a Gaussian prior with mean $\overline{x}_0$ and covariance matrix $P_0$, the posterior can be written as
\begin{equation}
\pi^\ast (x) = \frac{1}{C} \exp(-{\Phi_\cR}(x;y)),
\end{equation}
where ${\Phi_\cR}(x;y):=\Phi(x;y)+\cR(x)$ with ${\cR(x)}:=\frac12 \|x-\overline{x}_0\|_{P_0}^2$. We write for 
simplicity $\Phi(x)$ and ${\Phi_\cR(x)}$, respectively, and ignore the dependence on the data $y$ from now on.

In this paper, we explore time-dependent deterministic as well as stochastic processes of $M$ interacting particles $X_t^{(i)} \in 
\R^{N_x}$, $i=1,\ldots,M$,  with the property that the distribution of the particles $X_t^{(i)}$ approximates $\pi^\ast$ as $t\to \infty$. 
We also assume for simplicity that, at initial time, the particles $X_0^{(i)}$ are independent with their distribution given by the prior $\pi_0$. 
We treat the particle positions $X_t^{(i)}$ as random vectors with values in $\R^{N_x}$, which we collect into the $N_z$-dimensional
random vector
\begin{equation}
Z_t = \left( (X_t^{(1)})^{\rm T},(X_t^{(2)})^{\rm T},\ldots,(X_t^{(M)})^{\rm T}\right)^{\rm T}
\end{equation}
with $N_z = M N_x$, and consider gradient-based evolution equations for BIP of the form
\begin{equation}\label{eq:gradient_methods}
{\rm d}Z_t = -\cA(Z_t)\nabla_z \cV(Z_t)\,{\rm d}t + \Gamma(Z_t) \, {\rm d} W_t,
\end{equation}
$t\ge0$, for some positive semi-definite matrix-valued $\cA(z)\in \mathbb{R}^{N_z\times N_z}$ and $\Gamma (z) \in
\mathbb{R}^{N_z\times N_z}$, $W_t$ standard $N_z$-dimensional Brownian motion, and $\cV:\R^{N_z}\to\R$ 
a given potential. Here $z$ denotes an element in $\R^{N_z}$ of the form
\begin{equation} \label{eq:joint_state}
z = \left( (x^{(1)})^{\rm T},(x^{(2)})^{\rm T},\ldots,(x^{(M)})^{\rm T}\right)^{\rm T}
\end{equation}
with all $x^{(i)}$, $i=1,\ldots,M$, elements of $\R^{N_x}$. Throughout this paper we use the Ito interpretation of the multiplicative noise term in (\ref{eq:gradient_methods}). 

\begin{ex} \label{ex:BD}
The classical example of (\ref{eq:gradient_methods}) is provided by the scaled first-order (overdamped) Langevin (also called Brownian)
dynamics
\begin{equation}\label{eq:BD}
{\rm d}X_t^{(i)} = -\cC \nabla_x{\Phi_{\cR}}(X_t^{(i)})\,{\rm d}t+\sqrt{2}\cC^{1/2}\,{\rm d}W_t^{(i)},
\end{equation}
where $W_t^{(i)}$, $i=1,\ldots,M$, denotes independent $N_x$-dimensional Brownian motion and
$\cC$ is a constant symmetric positive-definite matrix. In this case, the particles do not interact and $\cA = I_M \otimes \cC$. 
Here $A \otimes B$ denotes the Kronecker product of two matrices and $I_M$ the $M$-dimensional identity matrix. Furthermore, 
$\Gamma = \sqrt{2}I_M \otimes \cC^{1/2}$ and the potential $\cV$ is given by
\begin{equation}
\cV(z) = \sum_{i=1}^M {\Phi_\cR}(x^{(i)}).
\end{equation}
It is then well-known that under appropriate conditions on $\cC$ and $\cV$
\begin{equation}
\lim_{t\to \infty} \mbox{Law}\,(X_t^{(i)}) = \pi^\ast, \qquad i=1,\ldots,M,
\end{equation}
in a weak sense \cite{sr:P14}. We assume throughout this paper that these conditions hold for the BIP under consideration, that is, 
$\cV(z) \to \infty$ as $\|z\|\to \infty$, $e^{-\beta\cV(z)}$ is integrable for all $\beta>0$ and $D^2{\Phi_\cR}^2\ge \alpha I$ for some $\alpha>0$.
\end{ex}

The PDF $\pi_t^{(i)}$ for each of the particles $X_t^{(i)}$ in (\ref{eq:BD}), $i=1,\ldots,M$, satisfies the Fokker--Planck equation
\begin{equation} \label{eq:FPE}
\partial_t \pi_t = \nabla_x \cdot \left( \pi_t \cC \nabla_x \frac{\delta {\rm KL}(\pi_t| \pi^\ast)}{\delta \pi_t}\right)
\end{equation}
with $\pi_t = \pi_t^{(i)}$ and  the Kullback--Leibler divergence defined by
\begin{equation} \label{eq:KLD}
{\rm KL}(\pi| \pi^\ast) =  \Bigl \langle \pi,\ln \left(\frac{\pi}{\pi^\ast}\right) \Bigr \rangle ,
\end{equation}
where $\langle g,f\rangle$ denotes the standard $L_2$-inner product, and its variational derivative given by
\begin{equation}
\frac{\delta {\rm KL}(\pi| \pi^\ast)}{\delta \pi} = \ln \left( \frac{\pi}{\pi^\ast} \right) .
\end{equation}
{It is well-known that (\ref{eq:FPE}) can be viewed as a gradient flow in the space of probability measures \cite{sr:JKO98}.}

An interesting generalisation of (\ref{eq:BD}) within the framework of (\ref{eq:gradient_methods}) has recently been proposed
in \cite{AGFHWLAS2019}. It relies on making the matrix $\cC$ dependent on the PDF $\pi_t$ itself. More specifically, 
\begin{equation}\label{eq:theoretical_cov}
\cC(\pi_t) = \E \left[(X_t-\overline{X}_t)(X_t-\overline{X}_t)^{\rm T}\right], \qquad \overline{X}_t = \E\left[X_t\right].
\end{equation}
This choice of $\cC$ is motivated by the gradient flow structure of the ensemble Kalman--Bucy filter as first 
revealed in \cite{sr:BR10} and gives rise to the Kalman--Wasserstein flow structure defined by the associated generalised
Fokker--Planck equation (\ref{eq:FPE}) \cite{AGFHWLAS2019}. See also \cite{SPSR2019,sr:INR19}. 
We will get back to the resulting stochastic interacting particle system in Section \ref{sec:LD}. 
Another generalisation of (\ref{eq:BD}) is provided by the Stein variational gradient descent formulation 
of \cite{sr:LW16} which can be viewed as an interacting particle approximation to (\ref{eq:FPE}) with $\cC = \pi_t I_{N_x}$ 
\cite{sr:LLN18}. { See also \cite{DNS19} for a detailed analysis of Stein variational gradient descent as a gradient
flow in the space of probability measures with its Riemannian geometry determined by $\cC$.}

Note that the Fokker--Planck equation (\ref{eq:FPE}) can be formally viewed as the Liouville equation corresponding to 
the mean-field ordinary differential equation (ODE)
\begin{equation} \label{eq:MODE}
\frac{{\rm d}}{{\rm d}t} X_t = \mathcal{F} (X_t,\pi_t) = -\cC \nabla_x \ln \left( \frac{\pi_t}{\pi^\ast} \right) (X_t).
\end{equation}
This reformulation provides the starting point for the deterministic interacting particle formulations proposed in 
\cite{SR2018,SPSR2019} for BIPs and for the blob method for diffusion in \cite{sr:CCP19}. 
These formulations will be further explored  in Section \ref{sec:FP}. 
The following example provides a particular instance of such an interacting particle system in the context of linear forward models 
and Gaussian distributions.

\begin{ex} \label{ex:Gauss}
Let us assume that the potential ${\Phi_\cR}$ is quadratic and that the number of particles 
is $M \ge N_x+1$. Then one can consider the deterministic interacting particle system
\begin{equation} \label{eq:linearGauss}
\frac{{\rm d}}{{\rm d}t} X_t^{(i)} = -\nabla_x {\Phi_\cR}(X_t^{(i)}) + \left( P_t^{xx}\right)^{-1} (X_t^{(i)} - \overline{X}_t)
\end{equation}
with empirical mean
\begin{equation}
\overline{X}_t = \frac{1}{M} \sum_{i=1}^M X_t^{(i)}
\end{equation}
and empirical covariance matrix
\begin{equation}
P_t^{xx} = \frac{1}{M}\sum_{i=1}^M (X_t^{(i)} - \overline{X}_t)(X_t^{(i)}-\overline{X}_t)^{\rm T}.
\end{equation}
If one choses the initial particle positions $X_0^{(i)}$ such that the associated empirical covariance matrix 
$P_0^{xx}$ is non-singular, then the particle system (\ref{eq:linearGauss}) satisfies
\begin{equation} \label{eq:limitGauss}
\lim_{t\to\infty} \overline{X}_t = \overline{x}^\ast, \qquad \lim_{t\to\infty} P_t^{xx} = P^\ast,
\end{equation}
with $\overline{x}^\ast$ and $P^\ast$ denoting the mean and covariance matrix, respectively, of the Gaussian 
posterior distribution $\pi^\ast$, that is
\begin{equation}
{\Phi_\cR}(x) = \frac{1}{2} (x-\overline{x}^\ast)^{\rm T}(P^\ast)^{-1}(x-\overline{x}^\ast).
\end{equation}
Indeed, it is easily verified that (\ref{eq:linearGauss}) implies
\begin{equation}
\frac{{\rm d}}{{\rm d}t} \overline{X}_t = -(P^\ast)^{-1}(\overline{X}_t - \overline{x}^\ast)
\end{equation}
as well as
\begin{equation}
 \frac{{\rm d}}{{\rm d}t} P_t^{xx} = -(P^\ast)^{-1} P_t^{xx} - P_t^{xx} (P^\ast)^{-1} + 2I_{N_x}.
\end{equation}
Also note that (\ref{eq:linearGauss}) fits into the framework (\ref{eq:gradient_methods}) with $\cA = I_{N_z}$, 
$\Gamma = 0_{N_z}$, and potential
\begin{equation} \label{eq:potl}
\cV(Z_t) = \sum_{i=1}^M {\Phi_\cR}(X_t^{(i)}) - \frac{M}{2} \ln | P_t^{xx}|.
\end{equation}
Alternatively, one could set $\cA = M I_{N_z}$ and scale the potential (\ref{eq:potl}) by $M^{-1}$. This formulation has
the advantage that the resulting potential can be interpreted as an approximation to an expectation value. However,
throughout this paper we will stick to unnormalised potentials of the form (\ref{eq:potl}).
\end{ex}


\section{Computational approaches} \label{sec:computational}


In this section, we first summarise deterministic interacting particle formulations \cite{sr:CCP19,SR2018,SPSR2019}
based on the Wasserstein dynamics in the space of probability measures. These formulations
lead to a finite-dimensional gradient system. We then extend them to 
the recently proposed Kalman--Wasserstein dynamics \cite{AGFHWLAS2019} and contrast the resulting deterministic
formulations with their stochastic counterparts put forward in \cite{AGFHWLAS2019,sr:INR19}. We also
extend the Kalman--Wasserstein dynamics to localised covariance matrices which allows for an efficient and robust
implementation of gradient-free formulations for BIP in the spirit of the ensemble Kalman filter
\cite{Evensen2003,sr:br11,AGFHWLAS2019}. Except for the gradient-free implementations, all presented interacting particle 
formulations fit into the general framework (\ref{eq:gradient_methods}). Such a unifying framework will be advantageous for
exploring hybrid approaches as well as other generalisations of the methods presented in this paper.


\subsection{Fokker--Planck based particle systems} \label{sec:FP}


In this section, we discuss interacting particle approximations to the Fokker--Planck equation (\ref{eq:FPE}) and its associated mean-field ODE
(\ref{eq:MODE}). We start with the case $\cC = I_{N_x}$ and follow the reproducing kernel Hilbert space (RKHS) approach put forward in
\cite{SR2018}. More specifically, given a RKHS $\mathcal{H}$ with symmetric kernel function $k(x,x')$ and inner product 
$\langle g,f\rangle_\mathcal{H}$ we consider the RKHS Kullback--Leibler divergence
\begin{equation} \label{eq:KLH}
{\rm KL}_{\cal H}(\pi|\pi^\ast) := \Bigl \langle \widetilde{\pi},\ln \left( \frac{\widetilde{\pi}}{\pi^\ast}\right) \Bigr \rangle_\mathcal{H} ,
 \end{equation}
with RKHS PDF
 \begin{equation}\label{eq:pi_tilde}
 \widetilde{\pi}(x) = \int_{\mathbb{R}^{N_x}} k(x,x')\pi(x')\,{\rm d}x'  = \langle k(x,\cdot),\pi (\cdot)\rangle.
 \end{equation}
 Here we have assumed that the kernel satisfies {for each $x'\in\R^{N_x}$}
 \begin{equation} \label{eq:normalisation}
 \int_{\R^{N_x}} k(x,x'){\rm d}x = 1.
 \end{equation}
 In fact, it turns out that one can work with unnormalised kernel functions since any normalisation constant vanishes 
 in the variational derivative of (\ref{eq:KLH}). See (\ref{eq:gradient_KL}) below. Hence we will drop condition (\ref{eq:normalisation}) 
 from now on.

\begin{ex}
A popular class of kernel functions is provided by the Gaussian kernels
\begin{equation} \label{eq:Gauss_kernel}
k(x,x') = \psi(\|x-x'\|_B)
\end{equation}
with $\psi(r) = \exp(-r^2/2)$ and $\|x\|_B^2 = x^{\rm T} B^{-1} x$ for some appropriate symmetric positive-definite matrix
$B \in \mathbb{R}^{N_x\times N_x}$. 
One may also consider data-driven kernel functions such as 
\begin{equation} \label{eq:data_kernel}
k(x,x') = \frac{ \psi(\|x-x'\|_B)}{\sqrt{\sum_{i=1}^M \psi(\|X^{(i)}-x'\|_B)}\sqrt{\sum_{j=1}^M \psi(\|x-X^{(j)}\|_B)}},
\end{equation}
which arise from a diffusion map approximation to the semigroup generated by a reversible diffusion process with invariant measure $\pi$
provided that $X^{(i)} \sim \pi$ and $B = 2\epsilon I_{N_x}$ for $\epsilon>0$ sufficiently small \cite{sr:H18,sr:TMM19}. 
\end{ex}
   
The associated RKHS Fokker--Planck equation in $\pi_t$ is now defined by 
\begin{equation} \label{eq:Liouville}
\partial_t \pi_t = -\nabla_x \cdot(\pi_t \mathcal{F})
\end{equation}
with the vector field $\cF$ given by
\begin{equation} \label{eq:RVF}
\cF(x,\pi_t) = -\nabla_x \frac{\delta {\rm KL}_\mathcal{H}(\pi_t|\pi^\ast)}{\delta \pi_t} (x).
\end{equation}
  
\begin{lemma}
The variational derivative of the RKHS Kullback--Leibler divergence is given by
\begin{equation} \label{eq:gradient_KL}
\frac{\delta {\rm KL}_\mathcal{H}(\pi_t|\pi^\ast)}{\delta \pi_t} = \ln \widetilde{\pi}_t - \ln \pi^\ast + \int_{\mathbb{R}^{N_x}} 
k(\cdot,x') \frac{\pi_t(x')}{\widetilde{\pi}_t(x')} {\rm d}x'.
\end{equation}
\end{lemma}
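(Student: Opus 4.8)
The plan is to first collapse the reproducing-kernel Hilbert space pairing in (\ref{eq:KLH}) to an ordinary $L_2$-pairing, after which the variational derivative follows from a routine Gateaux computation together with the symmetry of $k$. Throughout I would suppress the time index and write $\pi$, $\widetilde{\pi}$ for $\pi_t$, $\widetilde{\pi}_t$, and abbreviate the integral operator by $(K\pi)(x) := \int_{\R^{N_x}} k(x,x')\pi(x')\,{\rm d}x'$, so that $\widetilde{\pi} = K\pi$ is a superposition of kernel sections $k(\cdot,x')$ and hence lies in $\mathcal{H}$.

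First I would use the reproducing property $\langle k(\cdot,x'),\phi\rangle_\mathcal{H} = \phi(x')$ to rewrite, for $\phi\in\mathcal{H}$,
\begin{equation*}
\langle \widetilde{\pi},\phi\rangle_\mathcal{H} = \int_{\R^{N_x}} \pi(x')\,\langle k(\cdot,x'),\phi\rangle_\mathcal{H}\,{\rm d}x' = \int_{\R^{N_x}} \pi(x')\,\phi(x')\,{\rm d}x' = \langle \pi,\phi\rangle .
\end{equation*}
Taking $\phi = \ln(\widetilde{\pi}/\pi^\ast)$ turns the RKHS Kullback--Leibler divergence into the genuine $L_2$ functional
\begin{equation*}
{\rm KL}_\mathcal{H}(\pi|\pi^\ast) = \bigl\langle \pi,\, \ln\widetilde{\pi} - \ln\pi^\ast\bigr\rangle = \int_{\R^{N_x}} \pi(x)\bigl(\ln\widetilde{\pi}(x) - \ln\pi^\ast(x)\bigr)\,{\rm d}x,
\end{equation*}
which is the decisive simplification: the weight against which $\ln(\widetilde{\pi}/\pi^\ast)$ is integrated is now $\pi$ rather than $\widetilde{\pi}$.

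Next I would compute the Gateaux derivative along $\pi \mapsto \pi + \epsilon\eta$, writing $\widetilde{\eta} := K\eta$. Differentiating the last display at $\epsilon = 0$ yields one contribution from the explicit factor $\pi$ and one from the dependence of $\widetilde{\pi} = K\pi$ inside the logarithm,
\begin{equation*}
\frac{{\rm d}}{{\rm d}\epsilon}\Big|_{\epsilon=0}{\rm KL}_\mathcal{H}(\pi+\epsilon\eta|\pi^\ast) = \int_{\R^{N_x}}\eta\,\bigl(\ln\widetilde{\pi} - \ln\pi^\ast\bigr)\,{\rm d}x + \int_{\R^{N_x}}\pi\,\frac{\widetilde{\eta}}{\widetilde{\pi}}\,{\rm d}x .
\end{equation*}
To extract the variational derivative I would isolate $\eta$ in the second integral: substituting $\widetilde{\eta}(x) = \int k(x,x')\eta(x')\,{\rm d}x'$ and applying Fubini gives
\begin{equation*}
\int_{\R^{N_x}}\pi(x)\,\frac{\widetilde{\eta}(x)}{\widetilde{\pi}(x)}\,{\rm d}x = \int_{\R^{N_x}}\eta(x')\left(\int_{\R^{N_x}} k(x,x')\,\frac{\pi(x)}{\widetilde{\pi}(x)}\,{\rm d}x\right){\rm d}x' .
\end{equation*}
Reading off the factor multiplying $\eta(x')$, relabelling the dummy variable and invoking the symmetry $k(x,x') = k(x',x)$ turns the inner integral into $\int_{\R^{N_x}} k(\cdot,x')\,\pi(x')/\widetilde{\pi}(x')\,{\rm d}x'$, and combining with the first integral gives exactly (\ref{eq:gradient_KL}).

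The one genuinely delicate point is the reduction in the first step: applying the reproducing identity with $\phi = \ln(\widetilde{\pi}/\pi^\ast)$ presupposes that this function lies in $\mathcal{H}$, which I would either assume as part of the working hypotheses on the kernel and the densities or circumvent by reading $\langle\widetilde{\pi},\cdot\rangle_\mathcal{H}$ as the continuous extension $\langle\pi,\cdot\rangle$. That this reduction is the intended one is confirmed by a consistency check: differentiating the unreduced expression $\langle\widetilde{\pi},\ln(\widetilde{\pi}/\pi^\ast)\rangle$ would produce a spurious additive $+1$ inside the logarithmic term, whereas the $L_2$ form does not, matching the clean $\ln\widetilde{\pi} - \ln\pi^\ast$ appearing in (\ref{eq:gradient_KL}). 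The remaining steps are standard, the only care being the bookkeeping of the two Gateaux contributions and the use of kernel symmetry to land the final integral in the stated form.
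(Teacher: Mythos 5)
Your proposal is correct and follows essentially the same route as the paper: both first use the reproducing property to collapse the $\mathcal{H}$-pairing to the $L_2$-pairing $\langle \pi,\ln(\widetilde{\pi}/\pi^\ast)\rangle$ (the paper's equation (\ref{eq:KLH2})), then compute the Gateaux derivative, whose two contributions yield $\ln\widetilde{\pi}-\ln\pi^\ast$ and, after Fubini and kernel symmetry, the integral term in (\ref{eq:gradient_KL}). Your added remark on when $\ln(\widetilde{\pi}/\pi^\ast)\in\mathcal{H}$ flags a hypothesis the paper leaves implicit, but the argument itself is the same.
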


\begin{proof} We first note that the reproducing kernel property
$f(x') = \langle f(\cdot),k(\cdot,x')\rangle_\cH$ implies that
\begin{equation}\label{eq:KLH2}
{\rm KL}_{\cal H}(\pi|\pi^\ast) = \Bigl \langle \pi,\ln \left(\frac{\widetilde{\pi}}{\pi^\ast} \right) \Bigr \rangle
\end{equation}
The lemma then follows from the definition of the variational derivative
\begin{equation}
\Bigl \langle \frac{\delta {\rm KL}_\mathcal{H}(\pi|\pi^\ast)}{\delta \pi} , \delta \pi \Bigr\rangle =
\lim_{\epsilon \to 0} \frac{ {\rm KL}_\mathcal{H}(\pi + \epsilon \delta \pi|\pi^\ast) - {\rm KL}_\mathcal{H}(\pi|\pi^\ast)}{\epsilon}
\end{equation}
and
\begin{subequations}
\begin{align}
\lim_{\epsilon\to 0} \frac{1}{\epsilon}
 \Bigl \langle \pi,  \ln \frac{ \int_{\mathbb{R}^{N_x}} k(\cdot,x') (\pi(x') + \epsilon \delta \pi(x')) {\rm d}x' }{ 
 \int_{\mathbb{R}^{N_x}} k(\cdot,x') \pi(x') {\rm d}x' } \Bigr \rangle &= \Bigl \langle \frac{\pi}{\widetilde{\pi}}, \int_{\mathbb{R}^{N_x}} k(\cdot,x') 
 \delta \pi(x') {\rm d}x'  \Bigr \rangle \\
&= 
\Bigl \langle \int_{\mathbb{R}^{N_x}} 
k(\cdot,x') \frac{\pi(x')}{\widetilde{\pi}(x')} {\rm d}x', \delta \pi \Bigr \rangle 
\end{align}
\end{subequations}
in particular.
\end{proof}

\begin{remark}
Note that $k(x,x') = \delta (x-x')$ leads formally back to the standard definition of the Kullback--Leibler divergence and the second
term in (\ref{eq:gradient_KL}) vanishes. We also note that the blob method proposed in \cite{sr:CCP19}, which relies on 
the regularised Kullback--Leibler divergence
\begin{equation}
{\rm KL}_\epsilon(\pi|\pi^\ast) = \Bigl \langle \pi,\ln \left(\frac{\pi_\epsilon}{\pi^\ast} \right) \Bigr \rangle 
\end{equation}
with mollified PDF
\begin{equation}
\pi_\epsilon (x) = \int_{\R^{N_x}} \phi_\epsilon (x-x') \pi(x){\rm d}x,
\end{equation}
and regularisation parameter $\epsilon >0$ becomes identical to (\ref{eq:Liouville}) for mollification und kernel functions satisfying 
$\phi_\epsilon(x-x') = k(x,x')$. This follows from the equivalence of (\ref{eq:KLH}) and (\ref{eq:KLH2}). We note that the
Gaussian kernel (\ref{eq:Gauss_kernel}) satisfies this property while the data-driven kernel (\ref{eq:data_kernel}) does not.
\end{remark}
 
It follows from (\ref{eq:Liouville}) and (\ref{eq:RVF}) that
\begin{subequations}
\begin{align}
\frac{\rm d}{{\rm d}t} {\rm KL}_\mathcal{H}(\pi_t|\pi^\ast) &= \Bigl \langle \frac{\delta {\rm KL}_\mathcal{H}(\pi_t|\pi^\ast)}{\delta \pi_t} ,\partial_t 
\pi_t\Bigr \rangle\\
& = -\int_{\R^{N_x}} \Bigl \| \nabla_x \frac{\delta {\rm KL}_\mathcal{H}(\pi_t|\pi^\ast)}{\delta \pi_t} \Bigr \|^2 \pi_t  {\rm d}x 
= -\int_{\R^{N_x}} \bigl \| \cF \bigr \|^2 \pi_t  {\rm d}x\\
&\le 0
\end{align}
\end{subequations}
and, according to (\ref{eq:gradient_KL}), critical points $\pi_{\rm c}$ of ${\rm KL}_\mathcal{H}$ satisfy 
\begin{equation}
0=\ln \widetilde{\pi}_{\rm c} - \ln \pi^\ast + \int_{\mathbb{R}^{N_x}} k(\cdot,x') \frac{\pi_{\rm c} (x')}{\widetilde{\pi}_{\rm c}(x')} {\rm d}x' + c,
\end{equation}
where $c$ is a normalisation constant, that is,
\begin{equation} \label{eq:critical}
\pi^\ast (x) \propto \widetilde{\pi}_{\rm c}(x) e^{l(x)}
\end{equation}
with log-likelihood function
\begin{equation}
l(x) := \int_{\mathbb{R}^{N_x}} k(\cdot,x') \frac{\pi_{\rm c} (x')}{\widetilde{\pi}_{\rm c}(x')} {\rm d}x'  .
\end{equation}
In other words, samples $X_{\rm c}^{(i)}$, $i=1,\ldots,M$, from $\widetilde{\pi}_{\rm c}$ can be used to approximate expectation values with respect 
to the target measure $\pi^\ast$ by assigning them importance weights
\begin{equation} \label{eq:IW}
W_c^{(i)} \propto e^{l(X_{\rm c}^{(i)})}.
\end{equation}
The idea is that these weights are more uniform than the importance weights arising from the prior particles $X_0^{(i)}$, $i=1,\ldots,M$, 
and the least-squares misfit function (\ref{eq:misfit}).

A discrete Fokker--Planck particle dynamics is obtained by replacing $\pi_t$ with the empirical measure 
\begin{equation}
\pi_t(x) = \frac{1}{M} \sum_{i=1}^M \delta(x-X_t^{(i)}),
\end{equation}
which leads from (\ref{eq:Liouville}) to
\begin{equation} \label{eq:dynamic_FP1}
\frac{{\rm d}}{{\rm d}t} X_t^{(i)} = F_t(X_t^{(i)})
\end{equation}
with drift term $F_t(x)$ given by
\begin{equation}\label{eq:driftterm}
F_t(x) = - \nabla_x \left\{ \ln \left( \frac{1}{M} \sum_{j=1}^M
k(x,X_t^{(j)}) \right) - \ln \pi^\ast (x)  + \sum_{j=1}^M \frac{k(x,X_t^{(j)})}{\sum_{l=1}^M k(X_t^{(l)},X_t^{(j)})} \right\}.
\end{equation}
The resulting particle dynamics is equivalent to the one derived in \cite{SR2018,SPSR2019} starting from a discrete approximation to the
regularised Kullback--Leibler divergence. The equations  are of gradient flow structure (\ref{eq:gradient_methods}) with potential
\begin{equation} \label{eq:potential_FP}
\mathcal{V}(z) =  \sum_{i=1}^M \left\{ \ln \left( \frac{1}{M} \sum_{j=1}^M k(x^{(i)},x^{(j)}) \right) - \ln \pi^\ast(x^{(i)}) \right\},
\end{equation}
$\mathcal{A} = I_{N_z}$, and $\Gamma \equiv  0_{N_z}$, that is, $F_t(X_t^{(i)}) = -\nabla_{x^{(i)}}\cV(Z_t)$. Also recall that 
\begin{equation} \label{eq:Gaussian_likelihood}
-\ln \pi^\ast (x) = {\Phi_\cR}(x) + C.
\end{equation}
with $C >0$ an appropriate normalisation constant which is irrelevant for the particle dynamics (\ref{eq:dynamic_FP1}).

\begin{remark}
One can expect that (\ref{eq:dynamic_FP1}) converges to the associated RKHS Fokker--Planck dynamics (\ref{eq:Liouville}) 
as the number of particles, $M$, approaches infinity. A rigorous theoretical investigation of the closely related
blob method for diffusion can be found in \cite{sr:CCP19}. See also \cite{sr:R90} and \cite{sr:DM90} for earlier work on
deterministic numerical methods for approximating diffusion processes.  Furthermore, a diffusion map approach has been 
suggested in \cite{sr:TM19} that approximates the $\nabla_x \ln \pi_t$ term in the Fokker--Planck equation (\ref{eq:MODE}) without
an explicit kernel density estimate for $\pi_t$.  While computationally attractive, it is unclear whether such an approximation leads to a 
particle system fitting the gradient flow structure (\ref{eq:gradient_methods}). {We also mention the Stein variational gradient descent
method which, as previously mentioned, provides an interacting particle approximation to  (\ref{eq:MODE}) with a particular choice
of the operator $\cC$. Again, while the continuum limit possesses a gradient flow structure \cite{DNS19}, this structure is lost under
its finite particle approximation.}
\end{remark}

Equation (\ref{eq:critical}) suggests that an equilibrium particle distribution 
\begin{equation}
Z_{\rm c} = \{X_{\rm c}^{(i)}\}_{i=1}^M = \arg \inf_{z\in \R^{N_z}} \cV(z)
\end{equation}
can be used to approximate expectation values with respect to $\pi^\ast$ for $M$ sufficiently large using the following approximation
\begin{equation}\label{eq:approx_limit}
\widehat{\pi}^\ast (x) \propto
\left(\frac1M\sum\limits_{i=1}^M k(x,X_{\rm c}^{(i)})\right)\exp\left(\sum\limits_{j=1}^M\frac{k(x,X_{\rm c}^{(j)})}{\sum\limits_{l=1}^M k(X_{\rm c}^{(l)},X_{\rm c}^{(j)})}\right).
\end{equation}
More specifically, one first collects a desired number of realisations $X^{(i),(k)}$, $k=1,\ldots,K$, from each of the PDFs 
$k(\cdot,X_{\rm c}^{(i)})$, $i=1,\ldots,M$. These $N = M\cdot K$ realisations are then assigned importance weights
\begin{equation} \label{eq:IW_FP}
W^{(i),(k)} \propto \exp\left(\sum\limits_{j=1}^M\frac{k(X^{(i),(k)},X_{\rm c}^{(j)})}{\sum\limits_{l=1}^M k(X_{\rm c}^{(l)},X_{\rm c}^{(j)})}\right).
\end{equation}

The choice of the kernel functions $k(x,x')$ constitutes an important aspect for the computational implementation 
of (\ref{eq:dynamic_FP1}).  We have already mentioned the Gaussian kernel (\ref{eq:Gauss_kernel}) which requires the specification 
of an appropriate covariance matrix $B$. Given $M$ samples $X_0^{(i)}$, $i=1,\ldots,M$, from the prior PDF $\pi_0$ with
corresponding empirical covariance matrix $P_0^{xx}$, we set
\begin{equation} \label{eq:def_B}
B = \alpha P_0^{xx}
\end{equation}
for $\alpha >0$ appropriately chosen. The choice of the bandwidth $\alpha$ is itself a difficult task arising in general in kernel density estimation. We also use the $X_0^{(i)}$'s as initial conditions for (\ref{eq:dynamic_FP1}) in our numerical
experiments. Alternatively, the data-driven kernel (\ref{eq:data_kernel}) can be implemented with $X^{(i)} = X_0^{(i)}$ and $B$ given by (\ref{eq:def_B}). 

\begin{remark}\label{rem:curse_of_dim}
In a high dimensional setting the statistical curse of dimensionality arises, see for example \cite{sw:W06}. If the parameters have dimension $N_x$, 
then we need an ensemble size $M$ growing exponentially fast with $N_x$. The kernel density estimator approximates the target distribution in a local neighbourhood 
of the members in our particle system of size $M$ and in high dimensions those particle locations will be sparse in the parameter space. 
This problem can be counteracted partially by adaptive kernel functions.
Adaptive choices of the kernel functions such as 
\begin{equation} \label{eq:choiceB}
B = \alpha P_t^{xx}
\end{equation}
in the Gaussian kernel (\ref{eq:Gauss_kernel}) can, for example, be considered in the definition of the potential (\ref{eq:potential_FP}) 
and can help to capture the structure of the target distribution. The computation of the associated drift $F_t(X_t^{(i)}) = -\nabla_{x^{(i)}}\cV(Z_t)$ becomes, however, 
more involved. In our high dimensional numerical example we will consider the adaptive choice \eqref{eq:choiceB} but suppress the dependence of $B$ on $x$ in the 
computation of the drift \eqref{eq:driftterm} for simplicity. We note, that this method will no longer be of gradient flow structure \eqref{eq:gradient_methods}, 
but still leads to an effective improvement of the numerical results. 
\end{remark}


\subsection{Preconditioned and gradient-free particle formulations} \label{sec:KW}


We now return to the evolution equations (\ref{eq:gradient_methods}) 
with a non-trivial choice of the matrix $\cA(z)$ and $\Gamma(z)=0_{N_z}$. More specifically,
our choice of $\cA(z)$ is motivated by the gradient flow structure of the ensemble Kalman--Bucy filter \cite{sr:BR10}, that is,
\begin{equation} \label{eq:preconditioned}
\cA(Z_t) = I_M \otimes P_t^{xx} .
\end{equation}
See \cite{SPSR2019} for a more general discussion of preconditioned gradient flows in the context of BIPs. The same preconditioning
has recently been considered for stochastic interacting particle systems with $\Gamma(Z_t) = \sqrt{2} \cA(Z_t)^{1/2}$ in  \cite{AGFHWLAS2019}.
We will provide more details on these formulations in Section \ref{sec:LD}. We also remark that related ideas have previously appeared in
the Markov chain Monte Carlo literature. See, for example, \cite{sr:LMW18} and references therein. 

Applying (\ref{eq:preconditioned}) to the Fokker--Planck particle dynamics of Section \ref{sec:FP}, that is to (\ref{eq:dynamic_FP1}), 
leads to
\begin{equation}\label{eq:dynamic_regFPf}
\frac{{\rm d}}{{\rm d}t} X_t^{(i)} = P_t^{xx} F_t(X_t^{(i)}).
\end{equation}

\begin{ex} Let us return to Example \ref{ex:Gauss}. In the spirit of (\ref{eq:dynamic_regFPf}), we replace (\ref{eq:linearGauss}) by
\begin{subequations}
\begin{align}
\frac{{\rm d}}{{\rm d}t} X_t^{(i)} &= -P_t^{xx} \nabla_x {\Phi_\cR}(X_t^{(i)}) +  X_t^{(i)} - \overline{X}_t\\
&= -P_t^{xx} (P^\ast)^{-1} (X_t^{(i)}-\overline{x}^\ast) + X_t^{(i)} - \overline{X}_t.
\end{align}
\end{subequations}
Because of (\ref{eq:limitGauss}), we obtain
\begin{equation}
P_t^{xx} (P^\ast)^{-1} (X_t^{(i)}-\overline{x}^\ast) \approx X_t^{(i)} - \overline{x}^\ast 
\end{equation}
and, hence,
\begin{equation}
\frac{{\rm d}}{{\rm d}t} X_t^{(i)} \approx -(\overline{X}_t - \overline{x}^\ast)
\end{equation}
for $t\gg 1$ sufficiently large. This suggests that the preconditioning (\ref{eq:preconditioned}) is asymptotically optimal for linear
BIPs, that is, all directions of state space $\R^{N_x}$ and all particles are treated equally as $t\to\infty$. See also Lemma \ref{lemma2}
below on the affine invariance of the preconditioned gradient flow system.
\end{ex}

While (\ref{eq:dynamic_regFPf}) is asymptotically optimal for Gaussian posterior PDFs $\pi^\ast$, this is not necessarily true for
multimodal posterior PDFs in which case one may use a localised covariance matrix, as first considered in \cite{sr:LMW18}.
More specifically, one first defines distance-dependent weights
\begin{equation}\label{eq:weights_loc}
w_t^{ij} = \frac{\exp \left( -\frac{1}{2\gamma} \|X_t^{(i)}-X_t^{(j)}\|_D^2 \right)}{\sum_{l=1}^M \exp \left( - \frac{1}{2\gamma} \|X_t^{(i)}-X_t^{(l)}\|_D^2 \right)},
\end{equation}
where $\gamma >0$ is an appropriate scaling parameter and $D \in \mathbb{R}^{N_x\times N_x}$ an appropriate symmetric 
positive-definite matrix, and then each particle $X_t^{(i)}$ is assigned the weighted covariance matrix
\begin{equation} \label{eq:localised_P}
\begin{split}
P_t^{xx}(X_t^{(i)}) &= \sum_{j=1}^M w_t^{ij} \left( X_t^{(j)} - \overline{X}_t^{(i)}\right)  \left( X_t^{(j)} - \overline{X}_t^{(i)}\right)^{\rm T}\\
		 &= \sum_{j=1}^M w_t^{ij} X_t^{(j)} \left( X_t^{(j)}\right)^{\rm T} - \overline{X}_t^{(i)} \left(\overline{X}_t^{(i)}\right)^{\rm T}
 \end{split}
\end{equation}
with localised mean
\begin{equation}\label{eq:localised_mean}
\overline{X}_t^{(i)} = \sum_{j=1}^M w_t^{ij} X_t^{(j)}.
\end{equation}
Finally, the evolution equations (\ref{eq:dynamic_regFPf}) are replaced by
\begin{equation}\label{eq:dynamic_regFPff}
\frac{{\rm d}}{{\rm d}t} X_t^{(i)} = P_t^{xx}(X_t^{(i)}) F_t(X_t^{(i)})
\end{equation}
for $i=1,\ldots,M$. We note that (\ref{eq:dynamic_regFPff}) also fits into the framework of (\ref{eq:gradient_methods}), where
$\cA(Z_t) \in \mathbb{R}^{N_z\times N_z}$ is block-diagonal with its $i$th block entry given by $P_t^{xx}(X_t^{(i)})$.
Furthermore, 
\begin{equation}
\frac{{\rm d}}{{\rm d}t} \cV(Z_t) \le 0
\end{equation}
along solutions of (\ref{eq:dynamic_regFPff}). Finally, $\gamma \to \infty$ leads to $w_t^{ij} = 1/M$ in (\ref{eq:weights_loc}) 
and the covariance matrix $P_t^{xx}$ is recovered from (\ref{eq:localised_P}). In the sequel we always assume that
\begin{equation} \label{eq:D}
D = P_0^{xx}.
\end{equation}

We now demonstrate that the preconditioned formulations (\ref{eq:dynamic_regFPf}) and (\ref{eq:dynamic_regFPff}),
respectively, are invariant under affine transformations. The importance of affine invariant sampling methods for BIP has 
been highlighted in \cite{sr:GW10}. In the context of our general framework (\ref{eq:gradient_methods}) affine invariance is defined as follows.

\begin{definition}
An SDE (\ref{eq:gradient_methods}) is called invariant under an affine transformation
\begin{equation} \label{eq:transform1}
Z_t = {L} V_t + c,
\end{equation}
$M \in \mathbb{R}^{N_z\times N_z}$ invertible, if the associated equations of motion in $V_t$ are of the form
\begin{equation}
{\rm d}V_t = -\cA(V_t) \nabla_v \cU(V_t){\,{\rm d}t} + \Gamma(V_t)\,{\rm d}W_t
\end{equation}
with the potential $\cU$ defined by 
\begin{equation} \label{eq:transform_p}
\cU(V_t) = \cV({L}V_t+c).
\end{equation}
\end{definition}

We restrict the class of all affine transformations of the form (\ref{eq:transform1}) to those defined component-wise, that is,
\begin{equation} \label{eq:transform2}
X_t^{(i)} = AU_t^{(i)} + b, \qquad V_t = \left( (U_t^{(1)})^{\rm T},(U_t^{(2)})^{\rm T},\ldots,(U_t^{(M)})^{\rm T}\right)^{\rm T},
\end{equation}
$A \in \R^{N_x\times N_x}$ invertible.

\begin{lemma}\label{lemma2}
The preconditioned formulations (\ref{eq:dynamic_regFPf}) and (\ref{eq:dynamic_regFPff}),
respectively, are invariant under affine transformations of the form (\ref{eq:transform2}).
\end{lemma}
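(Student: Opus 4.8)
The plan is to exploit the fact that affine invariance, as formalised in the preceding Definition, requires only that after the substitution (\ref{eq:transform2}) the drift retains the gradient structure $-\cA(V_t)\nabla_v\cU(V_t)$ with $\cU(v)=\cV(Lv+c)$ as in (\ref{eq:transform_p}), and that the diffusion keeps its form. Since both formulations (\ref{eq:dynamic_regFPf}) and (\ref{eq:dynamic_regFPff}) are deterministic ($\Gamma\equiv 0_{N_z}$), the noise term transforms trivially and only the drift must be checked. Moreover, because $\cU = \cV\circ(L\,\cdot\,+c)$ \emph{by definition}, the chain rule gives $\nabla_v\cU = L^{\rm T}\nabla_z\cV$, hence $\nabla_z\cV = L^{-{\rm T}}\nabla_v\cU$; this step is automatic and carries no real content. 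The entire burden of the proof therefore falls on the transformation law of the preconditioner, so I would organise the argument around the single identity $L^{-1}\cA(Z_t)L^{-{\rm T}} = \cA(V_t)$.

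First I would record how the empirical statistics behave under the component-wise map $X_t^{(i)} = AU_t^{(i)}+b$, i.e.\ $Z_t = LV_t + c$ with $L = I_M\otimes A$ and $c = \mathbf{1}_M\otimes b$. Averaging gives $\overline{X}_t = A\overline{U}_t + b$, so $b$ cancels in the centred quantities $X_t^{(i)} - \overline{X}_t = A(U_t^{(i)}-\overline{U}_t)$, and the empirical covariance obeys $P_t^{xx} = AP_t^{uu}A^{\rm T}$. For the unlocalised system (\ref{eq:dynamic_regFPf}), where $\cA(Z_t) = I_M\otimes P_t^{xx}$ by (\ref{eq:preconditioned}), Kronecker algebra then yields $L^{-1}\cA(Z_t)L^{-{\rm T}} = I_M\otimes(A^{-1}P_t^{xx}A^{-{\rm T}}) = I_M\otimes P_t^{uu} = \cA(V_t)$. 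Multiplying the $Z$-equation on the left by $L^{-1}$ and using $\tfrac{{\rm d}}{{\rm d}t}Z_t = L\tfrac{{\rm d}}{{\rm d}t}V_t$ together with the chain-rule identity gives $\tfrac{{\rm d}}{{\rm d}t}V_t = -(I_M\otimes P_t^{uu})\nabla_v\cU(V_t)$, which is precisely (\ref{eq:dynamic_regFPf}) written in the $U$-variables.

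For the localised system (\ref{eq:dynamic_regFPff}) the same scheme applies block by block, so the remaining point is to show that the localised covariances transform as $P_t^{xx}(X_t^{(i)}) = AP_t^{uu}(U_t^{(i)})A^{\rm T}$. The step that needs care is the invariance of the distance weights (\ref{eq:weights_loc}): with the prescribed choice $D = P_0^{xx}$ from (\ref{eq:D}), and since the initial ensemble transforms as $P_0^{xx} = AP_0^{uu}A^{\rm T}$, one computes $A^{\rm T}D^{-1}A = (P_0^{uu})^{-1}$, so that $\|X_t^{(i)}-X_t^{(j)}\|_D^2 = \|U_t^{(i)}-U_t^{(j)}\|_{D'}^2$ with $D' = P_0^{uu}$; the weights $w_t^{ij}$ are therefore identical whether computed from the $X$- or the $U$-ensemble. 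Given this, the localised mean (\ref{eq:localised_mean}) satisfies $\overline{X}_t^{(i)} = A\overline{U}_t^{(i)} + b$ (using $\sum_j w_t^{ij}=1$), the centred vectors in (\ref{eq:localised_P}) again factor through $A$, and the claimed conjugation follows. The block-diagonal $\cA(Z_t)$ then satisfies $L^{-1}\cA(Z_t)L^{-{\rm T}} = \cA(V_t)$ block-wise, and the argument concludes as in the unlocalised case.

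I expect the main obstacle to be bookkeeping rather than anything deep: the crux is recognising that affine invariance here is a property of the Kalman--Wasserstein preconditioner and not of the specific drift $F_t$, and that the choice $D = P_0^{xx}$ is exactly what makes the localisation weights affine-invariant. With a fixed, ensemble-independent metric $D$ the weights, and hence the localised covariances, would fail to transform by conjugation with $A$, and invariance would break.
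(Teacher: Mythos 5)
Your proposal is correct and follows essentially the same route as the paper's proof: the invariance of the weights (\ref{eq:weights_loc}) under (\ref{eq:transform2}) due to the choice $D=P_0^{xx}$, the conjugation law $P_t^{xx}(X_t^{(i)})=AP_t^{uu}(U_t^{(i)})A^{\rm T}$, the chain rule $\nabla_{u^{(i)}}\cU(V_t)=A^{\rm T}\nabla_{x^{(i)}}\cV(Z_t)$, and the composition of these three facts. Your explicit verification that $A^{\rm T}D^{-1}A=(P_0^{uu})^{-1}$ fills in a detail the paper only asserts, but the argument is the same.
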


\begin{proof}
We note that (\ref{eq:D}) implies that the weights (\ref{eq:weights_loc}) are invariant under (\ref{eq:transform2}).
Furthermore, we obtain
\begin{equation}
P_t^{xx}(X_t^{(i)}) = P_t^{xx}(AU_t^{(i)}+b) = AP_t^{uu}(U_t^{(i)})A^{\rm T}
\end{equation}
and
\begin{equation}
\nabla_{u^{(i)}} \cU(V_t) = A^{\rm T} \nabla_{x^{(i)}} \cV(Z_t)
\end{equation}
with $\cU$ defined by (\ref{eq:transform_p}). The invariance property follows now from
\begin{equation}
{\frac{\rm d}{{\rm d}t}}X_t^{(i)} = A\,{\frac{\rm d}{{\rm d}t}}U_t^{(i)} = -P_t^{xx}(X_t^{(i)}) \nabla_{x^{(i)}}\cV(Z_t) = -A\left\{P_t^{uu}(U_t^{(i)}) \nabla_{u^{(i)}}\cU(V_t)\right\}.
\end{equation}
\end{proof} 

\begin{remark}
The choice (\ref{eq:def_B}) for the Gaussian kernels (\ref{eq:Gauss_kernel}) 
implies that the transformed potential (\ref{eq:transform_p}) is given by
\begin{equation}
\cU(v) = \sum_{i=1}^M \left\{ \ln \left( \frac{1}{M} \sum_{j=1}^M k(v^{(i)},v^{(j)}) \right) - \ln \pi^\ast(Av^{(i)}+b) \right\}.
\end{equation}
\end{remark}

One of the attractive features of the ensemble Kalman filter \cite{sr:evensen,sr:stuart15,sr:reichcotter15} is its gradient-free formulation
for posterior PDFs satisfying (\ref{eq:Gaussian_likelihood}). In order to extend this approach to our preconditioned 
gradient flow formulations we note that
\begin{subequations}
\begin{align}
P_t^{xx} \nabla_x \ln \pi^\ast (X_t^{(i)}) &= -P_t^{xx} \nabla_x {\Phi_\cR}(X_t^{(i)}) \\
&= -P_t^{xx} Dh(x)^{\rm T} R^{-1} (h(X_t^{(i)})-y) - P_t^{xx} P_0^{-1}(X_t^{(i)}
- \overline{x}_0)).
\end{align}
\end{subequations}
The key idea of the gradient-free formulations of the ensemble Kalman filter \cite{sr:evensen,sr:reichcotter15} is to replace 
$P_t^{xx} Dh(x)^{\rm T}$ with the covariance matrix  
\begin{equation}
P_t^{xh} = \frac{1}{M}\sum_{i=1}^M (X_t^{(i)}-\overline{X}_t)(h(X_t^{(i)})-\overline{h}_t)^{\rm T}, \qquad \overline{h}_t = \frac{1}{M}
\sum_{i=1}^M h(X_t^{(i)}).
\end{equation}
We emphasise that 
\begin{equation} \label{eq:cov_relation}
P_t^{xx} Dh(x)^{\rm T} = P_t^{xh}
\end{equation}
for linear forward maps $h$. For nonlinear forward maps $h$ this approximation will get more accurate if the particles are close to each other. Since the 
particles are representing a distribution, a localised version of the covariance matrix suggests to improve the accuracy of the gradient-free formulation. Increasing the ensemble size will also improve the accuracy of the gradient-free formulation, in particular in the localised formulation.

Following the corresponding continuous-time formulations of the ensemble Kalman--Bucy filter 
\cite{sr:br11,SPSR2019}, we obtain the following gradient-free reformulation of (\ref{eq:dynamic_regFPf}):
\begin{subequations}\label{eq:dynamic_regFPf_nonlinear_2}
\begin{align}
\frac{\rm d}{{\rm d}t} X_t^{(i)} &= - P_t^{xx}\nabla_{x^{(i)}}\left\{\ln \left( \frac1M\sum_{j=1}^M
k(X_t^{(i)},X_t^{(j)}) \right) + \sum_{j=1}^M \frac{k(X_t^{(i)},X_t^{(j)})}{\sum_{l=1}^M k(X_t^{(l)},X_t^{(j)})} \right\} \\
& \qquad - \left\{P_t^{xh} R^{-1}(h(X_t^{(i)})-y) + P_t^{xx}P_0^{-1}(X_t^{(i)}-\bar x_0) \right\}.
\end{align}
\end{subequations}
While no longer of gradient flow structure, the potential (\ref{eq:potential_FP}) still allows us to monitor the 
behaviour of (\ref{eq:dynamic_regFPf_nonlinear_2}) in the large time limit.
 
We note that (\ref{eq:cov_relation}) also holds for our localised covariance matrices $P_t^{xx}(X_t^{(i)})$ with $P_t^{xh}(X_t^{(i)})$ defined by
\begin{equation}
P_t^{xh}(X_t^{(i)}) = \sum_{{j=1}}^M w_t^{ij}(X_t^{(j)} - \overline{X}_t^{(i)})(h(X_t^{(j)})-\overline{h}_t^{(i)})^{\rm T}, \qquad
\overline{h}^{(i)}_t = \sum_{{j=1}}^M w_t^{ij} h(X_t^{(j)}).
\end{equation}
Hence, the localised formulation (\ref{eq:dynamic_regFPff}) gives rise to the gradient-free formulation
\begin{equation}\label{eq:dynamic_regFPf_nonlinear_3}
\begin{split}
\frac{\rm d}{{\rm d}t} X_t^{(i)} &= - P_t^{xx}(X_t^{(i)})\nabla_{x^{(i)}}\left\{\ln \left(\frac1M \sum_{j=1}^M
k(X_t^{(i)},X_t^{(j)}) \right) + \sum_{j=1}^M \frac{k(X_t^{(i)},X_t^{(j)})}{\sum_{l=1}^M k(X_t^{(l)},X_t^{(j)})} \right\} \\
& \qquad - \left\{P_t^{xh}(X_t^{(i)}) R^{-1}(h(X_t^{(i)})-y) + P_t^{xx} (X_t^{(i)}) P_0^{-1}(X_t^{(i)}-\bar x_0) \right\}.
\end{split}
\end{equation}
We will demonstrate in Section \ref{sec:numerics} that the localised formulation (\ref{eq:dynamic_regFPf_nonlinear_3}) 
is beneficial in case of multimodal posterior PDFs $\pi^\ast$.

\begin{remark} We emphasise that the localisation strategy proposed here is different from standard B-localisation
employed for ensemble Kalman filters, where the empirical covariance matrix $P_t^{xx}$ is tempered by a second matrix
$C$ such that the preconditioning matrix becomes $C\circ P_t^{xx}$ where $\circ$ denotes the Schur product of two matrices. 
See, for example, \cite{sr:evensen,sr:reichcotter15}. Furthermore, the proposed localised strategy can also be applied in the context of ensemble Kalman inversion \cite{DBCSPWSW2019, StLawIg2013, NKAS2018, SchSt2016}.
\end{remark}


\subsection{Langevin dynamics based particle systems} \label{sec:LD}

In \cite{AGFHWLAS2019}, the authors proposed the interacting Langevin dynamics 
\begin{equation}\label{eq:interacting_langevin}
{\rm d}X_t^{(i)} = -P_t^{xx}\nabla_x{\Phi_\cR}(X_t^{(i)})\,{\rm d}t+\sqrt{2}(P_t^{xx})^{1/2}\,{\rm d}{W_t^{(i)}},
\end{equation}
where $W_t^{(i)},\ i=1,\dots,M$, denote independent $N_x$-dimensional Brownian motions. The large particle limit $M\to\infty$ leads formally to 
the mean-field equation
\begin{equation}
{\rm d}X_t = -\mathcal{C}(\pi_t)\nabla_x{\Phi_\cR}(X_t)+\sqrt{2}\mathcal{C}^{1/2}(\pi_t)\,{\rm d}W_t,
\end{equation}
with $\cC(\pi)$ defined in \eqref{eq:theoretical_cov}, where the corresponding marginal densities $\pi_t,\ t\ge0,$ evolve according to the 
nonlinear Fokker--Planck equation
\begin{equation} \label{eq:nonlinear_FPE}
\partial_t \pi_t = \nabla_x \cdot\left(\pi_t\mathcal{C}(\pi_t)\nabla_x \frac{\delta{\rm KL}(\pi_t|\pi^\ast)}{\delta \pi_t}\right).
\end{equation}
The mathematical properties of (\ref{eq:nonlinear_FPE}) have been studied \cite{AGFHWLAS2019}. 
We note that a corresponding nonlinear Fokker--Planck equation
arises formally from the large ensemble limit of (\ref{eq:dynamic_regFPf}) with ${\rm KL}(\pi|\pi^\ast)$ replaced by the
RKHS Kullback--Leibler divergence (\ref{eq:KLH}).

Furthermore, in \cite{sr:NR19}, the authors propose a corrected finite-size particle system  in order to obtain the 
correct long-time behaviour even under finite ensemble sizes. More specifically, the corrected particle system evolves according to the system of SDEs
\begin{equation} \label{eq:corrected_BD}
{\rm d}X_t^{(i)} = -P_t^{xx}\nabla_x {\Phi_\cR}(X_t^{(i)})\,{\rm d}t+\frac{N_x+1}{M}(X_t^{(i)}-\overline{X}_t)\,{\rm d}t+\sqrt{2}(P_t^{xx})^{1/2}\,{\rm d}{W_t^{(i)}},
\end{equation}
where the gradient descent direction in the drift term has been corrected through the expression
\begin{equation}\label{eq:correction_term}
\nabla_{x^{(i)}} \cdot P_t^{xx} = \frac{N_x+1}{M}(X_t^{(i)}-\overline{X}_t)\in\R^{N_x}.
\end{equation}
Again we note that (\ref{eq:corrected_BD}) fits within the general framework of (\ref{eq:gradient_methods}) with
$\cA(Z_t) = I_M \otimes P_t^{xx}$, $\Gamma (Z_t) = \sqrt{2} I_M \otimes (P_t^{xx})^{1/2}$ and potential
\begin{equation}
\cV(Z_t) = \sum_{i=1}^M {\Phi_\cR}(X_t^{(i)}) - \frac{N_x+1}{2} \ln | P_t^{xx}|.
\end{equation}
Here we have assumed that $P_t^{xx}$ has full rank and used that 
\begin{equation}
\frac{\partial}{\partial (P_t^{xx})_{ij}} \ln |P_t^{xx}| = \left((P_t^{xx})^{-1}\right)_{ij},
\end{equation}
where $(A)_{ij}$ denotes the $(i,j)$th entry of a matrix $A$, and, hence,
\begin{equation}
\frac{1}{2} \nabla_{x^{(i)}} \ln |P_t^{xx}| = \frac{1}{M} (P_t^{xx})^{-1} (X_t^{(i)}-\overline{X}_t).
\end{equation}

As demonstrated in \cite{sr:NR19}, this correction leads to the Fokker--Planck equation
\begin{equation}
\partial_t \rho_t =\nabla_z\cdot\left(\rho_t \cA \nabla_z\frac{\delta{\rm KL}(\rho_t|\rho^\ast)}
{\delta \rho_t}\right),
\end{equation}
for the marginal PDF $\rho_t(z)$ in the state variable (\ref{eq:joint_state}) and the asymptotic behaviour
\begin{equation}
\lim_{t\to \infty} \rho_t = \rho^\ast
\end{equation}
follows under appropriate conditions on the potential ${\Phi_\cR}$. Here $\rho^\ast(z):= \prod_{i=1}^M \pi^\ast(x^{(i)})$. 
Hence formulation (\ref{eq:corrected_BD}) provides the appropriate generalisation of (\ref{eq:BD})
under the state-dependent diffusion matrix $\Gamma(Z_t)$ and finite ensemble sizes $M$.

A detailed theoretical study, including its affine invariance, of the formulation (\ref{eq:corrected_BD}) as well as 
efficient numerical implementations avoiding the need for computing the square root of the empirical covariance
matrix $P_t^{xx}$ can be found in \cite{sr:INR19}. 

We now derive a correction term for the particle evolution with $P_t^{xx}$ 
in \eqref{eq:interacting_langevin} replaced by the localised empirical covariance matrix \eqref{eq:localised_P}. 
Following \cite{sr:NR19}, the correction term is given by $\nabla_{x^{(i)}} \cdot P_t^{xx}(X_t^{(i)})$ and an explicit
expression is provided in the following lemma.

\begin{lemma}
It holds that
\begin{subequations} \label{eq:correction_local}
\begin{align}
\nabla_{x^{(i)}} \cdot P_t^{xx}(X_t^{(i)}) &= w_t^{ii}(N_x+1)(X_t^{(i)}-\overline{X}_t^{(i)}) + \sum_{j=1}^M X_t^{(j)} \left(X_t^{(j)}\right)^{\rm T} \nabla_{x^{(i)}} w_t^{ij}\\
&\qquad - \,\, \sum_{j=1}^M \overline{X}_t^{(i)} \left(X_t^{(j)}\right)^{\rm T} \nabla_{x^{(i)}}w_t^{ij} - 
 \sum_{j=1}^M X_t^{(j)} \left(\overline{X}_t^{(i)}\right)^{\rm T} \nabla_{x^{(i)}}w_t^{ij},
\end{align}
\end{subequations}
where $\bar X_t^{(i)}\in\R^{N_x}$ denotes the localised mean defined in \eqref{eq:localised_mean} and $w_t^{ij}\in[0,1]$ denote the 
localisation weights (\ref{eq:weights_loc}).
\end{lemma}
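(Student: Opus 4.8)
The plan is to compute the matrix divergence $\nabla_{x^{(i)}} \cdot P_t^{xx}(X_t^{(i)})$ directly from the second representation of the localised covariance in (\ref{eq:localised_P}), using the convention that for a matrix field $A$ one has $(\nabla_x \cdot A)_k = \sum_l \partial_{x_l} A_{kl}$; the symmetry of $P_t^{xx}(X_t^{(i)})$ makes the row/column choice immaterial. First I would record the only two ways in which $P_t^{xx}(X_t^{(i)})$ depends on $x^{(i)}$: explicitly, through the particle position $X_t^{(i)}$ appearing in the $j=i$ summand, and implicitly, through every localisation weight $w_t^{ij}$, each of which is a function of $x^{(i)}$ by (\ref{eq:weights_loc}). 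The single elementary fact driving the whole calculation is $\partial_{(x^{(i)})_l}(X_t^{(j)})_m = \delta_{ij}\delta_{lm}$, i.e.~only the $i$-th particle carries an explicit $x^{(i)}$-dependence.

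Writing $P_t^{xx}(X_t^{(i)}) = \sum_j w_t^{ij} X_t^{(j)}(X_t^{(j)})^{\rm T} - \overline{X}_t^{(i)}(\overline{X}_t^{(i)})^{\rm T}$ and applying the product rule entrywise, I would organise the resulting terms into two groups. The first group collects the contributions in which the derivative falls on an explicit factor $X_t^{(j)}$; by the identity above these survive only for $j=i$, and summing the two such contributions (one from each factor in the outer product, the second accruing a factor $N_x$ from the trace-type sum over the divergence index $l$) produces $w_t^{ii}(N_x+1)X_t^{(i)}$. The matching $-w_t^{ii}(N_x+1)\overline{X}_t^{(i)}$ piece comes from the same explicit differentiation applied to $\overline{X}_t^{(i)} = \sum_j w_t^{ij}X_t^{(j)}$ inside the quadratic term $\overline{X}_t^{(i)}(\overline{X}_t^{(i)})^{\rm T}$, so that the first group assembles to the leading term $w_t^{ii}(N_x+1)(X_t^{(i)} - \overline{X}_t^{(i)})$ of (\ref{eq:correction_local}).

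The second group collects the contributions in which the derivative falls on a weight $w_t^{ij}$. From the linear term one obtains $\sum_j X_t^{(j)}(X_t^{(j)})^{\rm T}\nabla_{x^{(i)}}w_t^{ij}$, while differentiating the two copies of $\overline{X}_t^{(i)}$ in the quadratic term yields the two remaining contributions $-\sum_j \overline{X}_t^{(i)}(X_t^{(j)})^{\rm T}\nabla_{x^{(i)}}w_t^{ij}$ and $-\sum_j X_t^{(j)}(\overline{X}_t^{(i)})^{\rm T}\nabla_{x^{(i)}}w_t^{ij}$; the step that casts them in the stated matrix form is recognising each scalar $(X_t^{(j)})^{\rm T}\nabla_{x^{(i)}}w_t^{ij}$, respectively $(\overline{X}_t^{(i)})^{\rm T}\nabla_{x^{(i)}}w_t^{ij}$, as an outer product acting on $\nabla_{x^{(i)}}w_t^{ij}$. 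Summing the two groups gives (\ref{eq:correction_local}).

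The computation is entirely elementary, so I do not expect a genuine obstacle beyond careful bookkeeping; the one place demanding attention is keeping the two sources of $x^{(i)}$-dependence (explicit position versus weights) cleanly separated and tracking the summed index $l$ of the divergence, since it is this sum that generates the $N_x$ and hence the factor $N_x+1$. As a consistency check I would verify the limit $\gamma \to \infty$: then $w_t^{ij}=1/M$ and $\nabla_{x^{(i)}}w_t^{ij}=0$, so all three weight-gradient terms vanish, $\overline{X}_t^{(i)}\to\overline{X}_t$, and (\ref{eq:correction_local}) collapses to $\frac{N_x+1}{M}(X_t^{(i)}-\overline{X}_t)$, recovering the unlocalised correction term (\ref{eq:correction_term}).
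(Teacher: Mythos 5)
Your proposal is correct and follows essentially the same route as the paper: both start from the second representation of $P_t^{xx}(X_t^{(i)})$ in (\ref{eq:localised_P}), apply the product rule, and separate the explicit dependence through $X_t^{(i)}$ (which yields the $w_t^{ii}(N_x+1)(X_t^{(i)}-\overline{X}_t^{(i)})$ term) from the implicit dependence through the weights $w_t^{ij}$ (which yields the three $\nabla_{x^{(i)}}w_t^{ij}$ terms); the paper merely packages the same bookkeeping into four auxiliary divergence identities. Your $\gamma\to\infty$ consistency check against (\ref{eq:correction_term}) is a sensible addition not present in the paper.
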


\begin{proof}
We will make use of the following identities for the computation of the correction term:
\begin{subequations}
\begin{align}
\nabla_{x^{(i)}} \cdot (w_t^{ii} x^{(i)} (x^{(i)})^{\rm T} ) &= w_t^{ii}(N_x+1)x^{(i)} + x^{(i)} (x^{(i)})^{\rm T} \nabla_{x^{(i)}} w_t^{ii}\\
\nabla_{x^{(i)}} \cdot (w_t^{ij} x^{(i)} (x^{(j)})^{\rm T} ) &= w_t^{ij}x^{(j)}+ x^{(i)} (x^{(j)})^{\rm T} \nabla_{x^{(i)}} w_t^{ij}\\
\nabla_{x^{(i)}} \cdot (w_t^{ij} x^{(j)} (x^{(i)})^{\rm T} ) &= w_t^{ij} N_x x^{(j)} + x^{(j)} (x^{(i)})^{\rm T} \nabla_{x^{(i)}} w_t^{ij}\\
\nabla_{x^{(i)}} \cdot (w_t^{ij} x^{(j)} (x^{(j)})^{\rm T} ) &= x^{(j)} (x^{(j)})^{\rm T} \nabla_{x^{(i)}} w_t^{ij}.
\end{align}
\end{subequations}
Here we have assumed that $j\not=i$.
Applying these identities to the localised covariance matrix (\ref{eq:localised_P}) yields 
\begin{equation}
\nabla_{x^{(i)}} \cdot \left( \sum_{j=1}^M w_t^{ij} X_t^{(j)}\left( X_t^{(j)}\right)^{\rm T} \right) = w_t^{ii}(N_x+1)X_t^{(i)} + \sum_{j=1}^M
X_t^{(j)}\left( X_t^{(j)}\right)^{\rm T}\nabla_{x^{(i)}} w_t^{ij}
\end{equation}
as well as 
\begin{align}
\nabla_{x^{(i)}} \cdot \left( \overline{X}_t^{(i)} \left( \overline{X}_t^{(i)}\right)^{\rm T} \right) &= w_t^{ii}(N_x+1)\overline{X}_t^{(i)} \\ &+
\sum_{j=1}^M \left\{\overline{X}_t^{(i)}\left(X_t^{(j)}\right)^{\rm T} + X_t^{(j)} \left(\overline{X}_t^{(i)}\right)^{\rm T} \right\} \nabla_{x^{(i)}}w_t^{ij}
\end{align}
and (\ref{eq:correction_local}) follows.
\end{proof}

The correction term requires the computation of $\nabla_{x^{(i)}} w_t^{ij}$ for weights given by (\ref{eq:weights_loc}):
\begin{subequations}
\begin{align}
\nabla_{x^{(i)}} w_t^{ij} &= \frac{w_t^{ij}}{\gamma} D^{-1} \biggl\{ -(X_t^{(i)}-X_t^{(j)}) \\ &\qquad\qquad\quad\ \ + \frac{\sum_{l=1}^M \exp\left( -\frac{1}{2\gamma}\|X_t^{(i)}-X_t^{(l)}\|_D^2\right) 
(X_t^{(i)}-X_t^{(l)})}{\sum_{l=1}^M \exp \left(-\frac{1}{2\gamma} \|X_t^{(i)}-X_t^{(l)}\|_D^2\right)}
\biggr\}\\
&= \frac{w_t^{ij}}{\gamma} D^{-1} \biggl\{ -(X_t^{(i)}-X_t^{(j)}) + \sum_{l=1}^M w_t^{il} (X_t^{(i)}-X_t^{(l)})\biggr\} \\
&= \frac{w_t^{ij}}{\gamma} D^{-1} \left\{X_t^{(j)} - \overline{X}_t^{(i)}\right\}.
\end{align}
\end{subequations}

With this result we can formulate the corrected evolution system for the interacting Langevin diffusion model with localised covariance matrix by extending the dynamical system with the drift $\nabla_{x^{(i)}}\cdot P_t^{xx}(X_t^{(i)})$, that is,
\begin{equation}\label{eq:interacting_langevin_corrected}
{\rm d}X_t^{(i)} = -P_t^{xx}(X_t^{(i)})\nabla{\Phi_\cR}(X_t^{(i)})\,{\rm d}t+\nabla_{x^{(i)}} \cdot P_t^{xx}(X_t^{(i)})\,{\rm d}t+\sqrt{2}(P_t^{xx}(X_t^{(i)}))^{1/2}\,{\rm d}W_t^{{{(i)}}}.
\end{equation}
In analogy to (\ref{eq:corrected_BD}), this leads to the following lemma for the joint density of the particle system evolving through \eqref{eq:interacting_langevin_corrected}. See \cite{sr:NR19}.

\begin{lemma}
The joint density corresponding to the particle system evolving by
 \eqref{eq:interacting_langevin_corrected} satisfies the Fokker--Planck equation given by
\begin{equation}
\partial_t\rho_t = \nabla_z\cdot \left( \rho_t \cA \nabla_z\frac{\delta{\rm KL}(\rho_t|\rho^\ast)}{\delta \rho_t}\right)
\end{equation}
with $\cA \in \R^{N_z\times N_z}$ a block diagonal matrix with its $i$th block entry given by $P_t^{xx}(X_t^{(i)})$.
\end{lemma}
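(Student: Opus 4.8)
The plan is to write down the standard Itô forward Kolmogorov (Fokker--Planck) equation for the collective diffusion \eqref{eq:interacting_langevin_corrected} and to show that the drift correction $\nabla_{x^{(i)}}\cdot P_t^{xx}(X_t^{(i)})$ is exactly the term needed to recast the second-order operator in divergence (gradient-flow) form. Collecting the $M$ equations \eqref{eq:interacting_langevin_corrected} into the single equation ${\rm d}Z_t = b(Z_t)\,{\rm d}t + \Gamma(Z_t)\,{\rm d}W_t$ with $\Gamma(Z_t) = \sqrt{2}\,\cA(Z_t)^{1/2}$, the diffusion matrix is $\Sigma := \Gamma\Gamma^{\rm T} = 2\cA$, since $\cA$ is symmetric positive semi-definite and block diagonal with $i$th block $P_t^{xx}(X_t^{(i)})$. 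Writing $V(z) := -\ln\rho^\ast(z) = \sum_{i=1}^M \Phi_{\cR}(x^{(i)}) + {\rm const}$, the block structure gives $(\cA\nabla_z V)^{(i)} = P_t^{xx}(X_t^{(i)})\nabla_{x^{(i)}}\Phi_{\cR}(x^{(i)})$, so the drift reads $b = -\cA\nabla_z V + \nabla_z\cdot\cA$, where $(\nabla_z\cdot\cA)^{(i)} = \nabla_{x^{(i)}}\cdot P_t^{xx}(X_t^{(i)})$ is precisely the correction term. The Itô Fokker--Planck equation is then
\begin{equation}
\partial_t\rho_t = -\nabla_z\cdot(\rho_t b) + \nabla_z\cdot\bigl(\nabla_z\cdot(\rho_t\cA)\bigr).
\end{equation}

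First I would expand the second-order term by the Leibniz rule for the divergence of a scalar times a symmetric matrix field, namely $\nabla_z\cdot(\rho_t\cA) = \cA\nabla_z\rho_t + \rho_t(\nabla_z\cdot\cA)$, whence
\begin{equation}
\nabla_z\cdot\bigl(\nabla_z\cdot(\rho_t\cA)\bigr) = \nabla_z\cdot(\cA\nabla_z\rho_t) + \nabla_z\cdot\bigl(\rho_t\,\nabla_z\cdot\cA\bigr).
\end{equation}
Substituting $b = -\cA\nabla_z V + \nabla_z\cdot\cA$ into $-\nabla_z\cdot(\rho_t b)$ produces a term $-\nabla_z\cdot(\rho_t\,\nabla_z\cdot\cA)$ that cancels the corresponding term above. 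This cancellation is the entire point of the correction: the finite-size drift correction is engineered to annihilate the non-variational part of the Itô generator. What survives is
\begin{equation}
\partial_t\rho_t = \nabla_z\cdot(\rho_t\cA\nabla_z V) + \nabla_z\cdot(\cA\nabla_z\rho_t).
\end{equation}

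Finally I would recognise the right-hand side as the claimed gradient-flow form. Using $\delta{\rm KL}(\rho_t|\rho^\ast)/\delta\rho_t = \ln(\rho_t/\rho^\ast)$ and hence $\nabla_z\,\delta{\rm KL}/\delta\rho_t = \nabla_z\ln\rho_t + \nabla_z V$, one has $\rho_t\cA\nabla_z(\delta{\rm KL}/\delta\rho_t) = \cA\nabla_z\rho_t + \rho_t\cA\nabla_z V$, which is exactly the sum of the two surviving terms; taking $\nabla_z\cdot$ yields the stated equation. The step demanding the most care is the identification $(\nabla_z\cdot\cA)^{(i)} = \nabla_{x^{(i)}}\cdot P_t^{xx}(X_t^{(i)})$: although each block $P_t^{xx}(X_t^{(i)})$ depends on all particles through the weights $w_t^{ij}$, the block-diagonal structure of $\cA$ forces the divergence acting on the $i$th block to sum only over derivatives in $x^{(i)}$, so that only the intra-block derivatives appear, matching the correction term supplied by \eqref{eq:correction_local}. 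One must also keep the Itô rather than Stratonovich convention throughout, writing all second derivatives as acting on the product $\Sigma_{ab}\rho_t$; it is precisely the state dependence of $P_t^{xx}(X_t^{(i)})$ that generates the correction in the first place.
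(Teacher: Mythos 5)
Your proof is correct. The paper itself offers no proof of this lemma---it asserts it ``in analogy to (\ref{eq:corrected_BD})'' and defers to \cite{sr:NR19}---and your computation is exactly the standard verification that fills that gap: the It\^{o} forward equation with $\Sigma = 2\cA$, the Leibniz splitting $\nabla_z\cdot\bigl(\nabla_z\cdot(\rho_t\cA)\bigr) = \nabla_z\cdot(\cA\nabla_z\rho_t) + \nabla_z\cdot\bigl(\rho_t\,\nabla_z\cdot\cA\bigr)$, the cancellation of the latter term against the correction drift, and the identification $(\nabla_z\cdot\cA)^{(i)} = \nabla_{x^{(i)}}\cdot P_t^{xx}(X_t^{(i)})$, which you correctly justify by noting that the block-diagonal structure restricts the divergence to intra-block derivatives even though each block depends on all particles through the weights $w_t^{ij}$.
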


Hence by the Kalman--Wasserstein gradient flow structure \cite{AGFHWLAS2019,sr:NR19} in the joint distribution $\rho_t$, 
the finite-size particle system \eqref{eq:interacting_langevin_corrected} can be used in the long-time limit to approximate i.i.d.~samples 
from $\pi^\ast$.


Gradient-free variants of both formulations (\ref{eq:corrected_BD}) 
and (\ref{eq:interacting_langevin_corrected}), respectively, can now be formulated in a straightforward
manner by replacing $P_t^{xx} Dh(X_t^{(i)})^T$ by $P_t^{xh}$ \cite{AGFHWLAS2019} 
or $P_t^{xh}(X_t^{(i)})$ respectively. For example, we obtain the following gradient-free formulation
\begin{subequations}\label{eq:interacting_langevin_corrected_gradientfree}
\begin{align}
{\rm d}X_t^{(i)} &= -\left\{P_t^{xh}R^{-1}(h(X_t^{(i)})-y) + P_t^{xx}P_0^{-1}(X_t^{(i)}-\overline{x}_0) \right\}{\rm d}t\\
&\qquad \qquad 
+\,\nabla_{x^{(i)}} \cdot P_t^{xx}\,{\rm d}t+\sqrt{2}(P_t^{xx})^{1/2}\,{\rm d}W_t^{(i)}
\end{align}
\end{subequations}
and the corresponding localised gradient-free formulation
\begin{subequations}\label{eq:interacting_langevin_corrected_localised}
\begin{align}
{\rm d}X_t^{(i)} &= -\left\{P_t^{xh}(X_t^{(i)})R^{-1}(h(X_t^{(i)})-y) + P_t^{xx}(X_t^{(i)})P_0^{-1}(X_t^{(i)}-\overline{x}_0) \right\}{\rm d}t\\
&\qquad \qquad 
+\,\nabla_{x^{(i)}} \cdot P_t^{xx}(X_t^{(i)})\,{\rm d}t+\sqrt{2}(P_t^{xx}(X_t^{(i)}))^{1/2}\,{\rm d}{W_t^{(i)}}.
\end{align}
\end{subequations}


\section{Numerical results} \label{sec:numerics}

In this section, we apply the proposed methodologies to a sequence of increasingly more challenging numerical examples ranging
from low to high-dimensional and unimodal to multimodal. We have collected the code for our numerical examples in the GitHub repository \cite{sw:SR2021}.

\subsection{2-dimensional unimodal example}\label{subsec:2d}

Before we discuss a bimodal example to show the improving effects of the localised gradient-free formulation, 
we implement a example, which is nearly Gaussian, originally presented in \cite{ErnstEtAl2015} and later also used in 
\cite{AGFHWLAS2019, MHGV2018}. 

More specifically, we consider the following one-dimensional elliptic boundary-value problem
\begin{equation*}
-\frac{{\rm d}}{{\rm d}s}\left(\exp(x_1)\frac{{\rm d}}{{\rm d}s}p(s)\right) = 1,\quad s\in[0,1],
\end{equation*}
with boundary condition $p(0) = 0$ and $p(1) = x_2$. An explicit solution of this boundary-value problem in the parameters 
$x = (x_1,x_2)^{\rm T} \in \mathbb{R}^2$ is given by
\begin{equation*}
p(s,x) = x_2s+\exp(-x_1)\left(-\frac{s^2}{2}+\frac{s}2\right)
\end{equation*}
and the forward map in \eqref{eq:IP} is defined by 
\begin{equation*}
h(x) = (p(s_1,x),p(s_2,x))^{\rm T},
\end{equation*}
that is, we assume that noisy measurements of $p(\cdot,x)$ are given at locations $s_1=0.25$ and $s_2=0.75$.

Furthermore, we assume Gaussian measurement errors $\Xi\sim\cN(0,R)$ with $R=0.01\cdot I_2$, $I_2\in\R^{2\times2}$ the identity matrix,
and a Gaussian prior $\cN(0,P_0)$ with $P_0=100\cdot I_2$. The  data is constructed by drawing a reference parameter 
$x^\dagger\sim\pi_0$ and by setting the observation to $y = h(x^\dagger)+\xi^\dagger$, where $\xi^\dagger$ is a realisation of the 
measurement error. Our numerical results are based on the realisations $x^\dagger = (0.0865, -0.8157)^{\rm T}$ and 
$y = (-0.0173,-0.573)^{\rm T}$.

We use the MATLAB solver {\rm ode45} to solve the ODE representing the deterministic Fokker--Planck dynamics 
\eqref{eq:dynamic_regFPf_nonlinear_2} and the Euler--Maruyama scheme with a step-size $\Delta t = 0.0001$ for 
the interacting Langevin sampler \eqref{eq:interacting_langevin_corrected_localised}. The preconditioned Fokker--Planck dynamics 
is implemented using Gaussian kernels (\ref{eq:Gauss_kernel}) with $B = \alpha P_0$ and 
$\alpha = 0.05$. {Both methods have been initialized by an i.i.d.~sample from the prior distribution $N(0,P_0)$.}

Figure \ref{fig:2d_Stuart_particles} shows the posterior approximation through the particle systems with $M=200$ particles resulting 
from the deterministic Fokker--Planck dynamics as well as the interacting Langevin sampler. One finds that both methods approximate 
the posterior distribution well. 

\begin{figure}[!htb]
	\includegraphics[width=1\textwidth]{./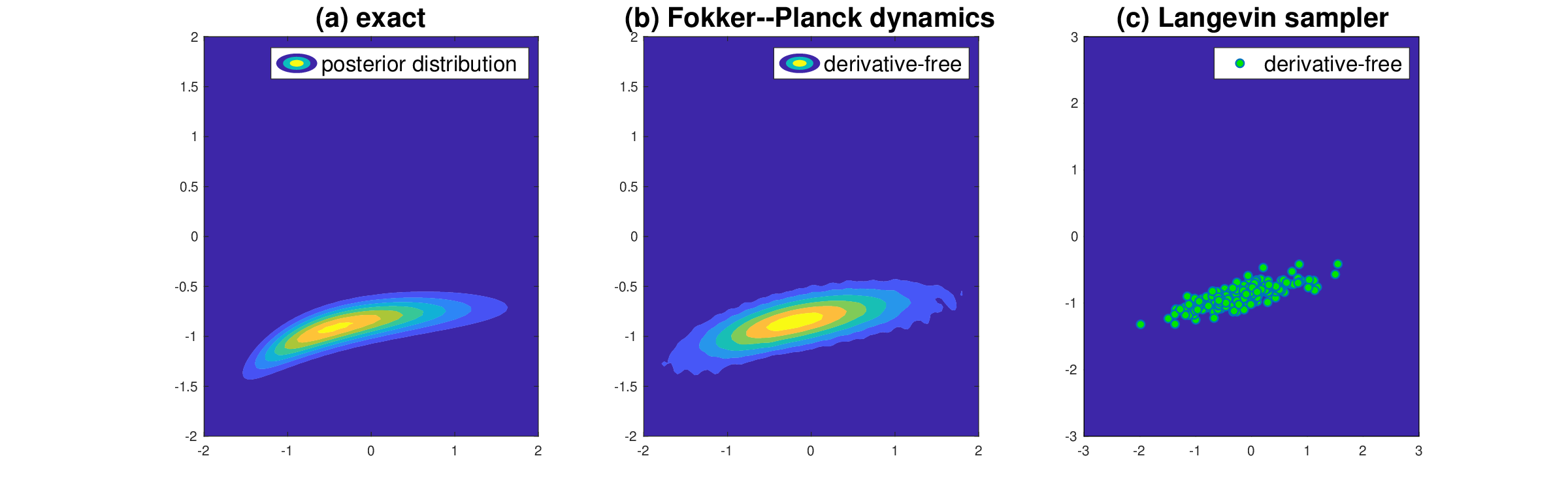}
    	\caption{Approximations to the posterior PDF from interacting Fokker--Planck dynamics: b) kernel density estimate 
	from deterministic Fokker--Planck dynamics, c) particle system from interacting Langevin sampler. The exact posterior PDF is displayed in 
	panel a).}\label{fig:2d_Stuart_particles}
\end{figure}

The performance of the deterministic Fokker--Planck dynamics depends crucially on the kernel parameter $\alpha$ in (\ref{eq:def_B}) as
can be seen from Figure \ref{fig:2d_Stuart_estimate_different_alpha}, where kernel density estimates for the target distribution for different choices 
of $\alpha$ are displayed. The effect of the different choices of $\alpha$ is also demonstrated in the time evolution of the potential 
$\cV$ from \eqref{eq:potential_FP}, which can be found in Figure \ref{fig:2d_Stuart_potential_different_alpha}. If the scaling 
$\alpha$ is chosen too small, then the resulting density underestimates the spread, whereas a too large $\alpha$ overestimates the spread. 
Overall, the choice of the scaling parameter $\alpha$ is crucial and quite sensitive, which is a general challenge in kernel density estimation.

\begin{figure}[!htb]
	\includegraphics[width=1\textwidth]{./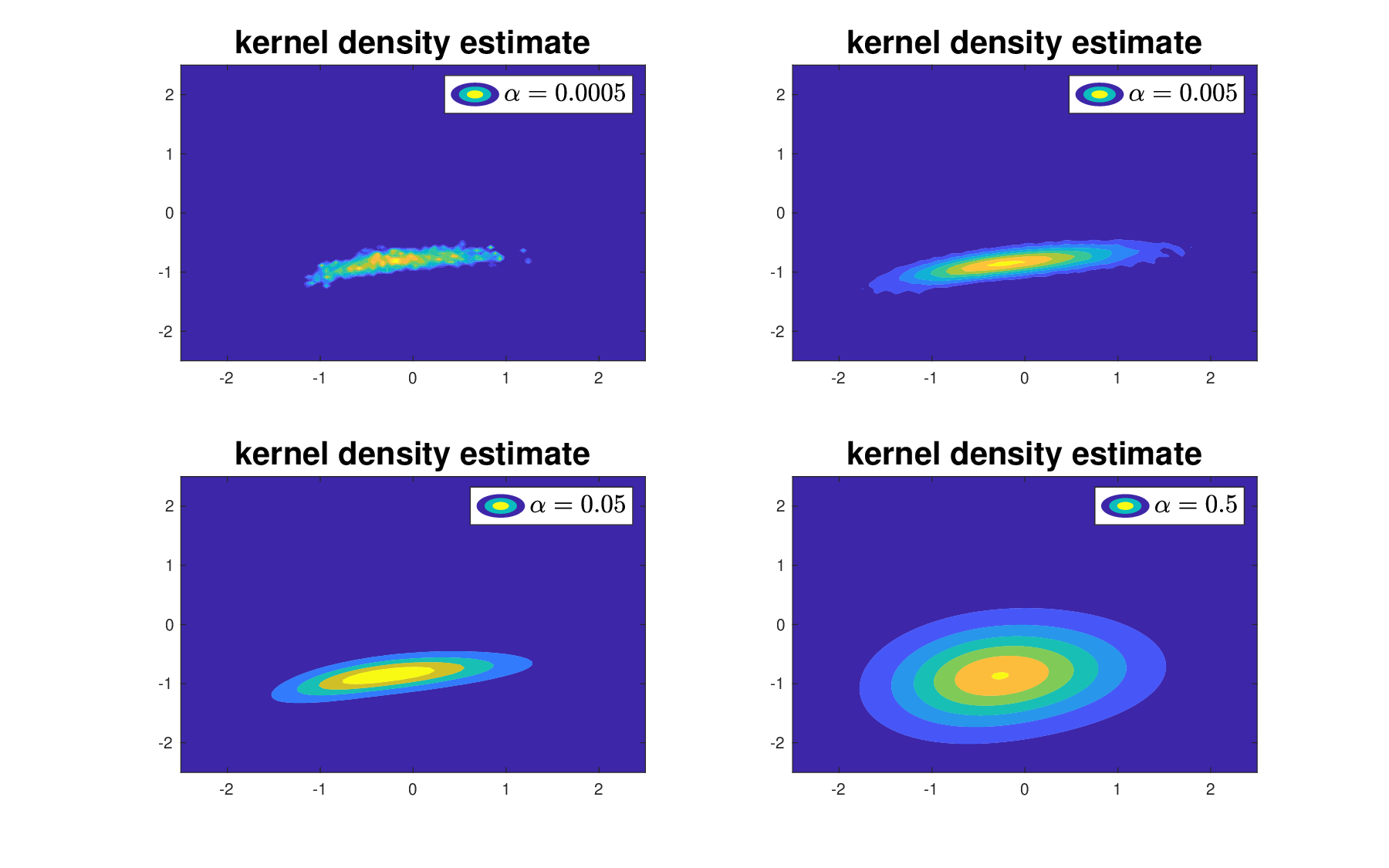}
    	\caption{Approximations to the posterior PDF from deterministic Fokker--Planck dynamics for different values of
	the kernel parameter $\alpha$: a) $\alpha=0.0005$, b) $\alpha=0.005$, c) $\alpha=0.05$, d) $\alpha=0.5$.}\label{fig:2d_Stuart_estimate_different_alpha}
\end{figure}
\begin{figure}[!htb]
	\includegraphics[width=1\textwidth]{./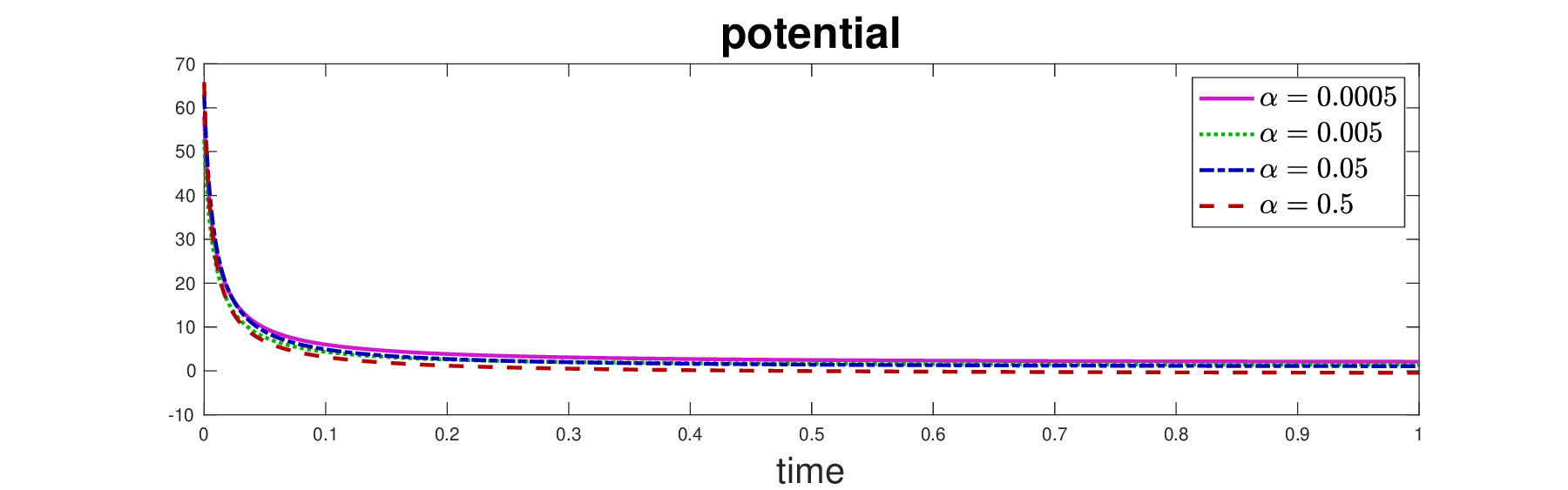}
    	\caption{Time evolution of the potential function $\cV$ for different choices of $\alpha$.}\label{fig:2d_Stuart_potential_different_alpha}
\end{figure}

We find that both the deterministic and stochastic interacting particle formulations work well for this simple 2-dimensional example. While
the interacting Langevin dynamics is easier to implement, the Fokker--Planck dynamics immediately results in a kernel density estimate for
the posterior distribution and its performance can be monitored through the time evolution of the potential energy \eqref{eq:potential_FP}.

\subsection{2-dimensional bimodal example}\label{subsec:2d_bimodal}

We next consider a 2-dimensional bimodal example resulting from the nonlinear forward map
\begin{equation}
h:\R^2\to\R, \quad h(x) = (x_1-x_2)^2.
\end{equation}
We assume a Gaussian prior with mean zero and covariance $P_0 = I_2\in\R^{2\times 2}$ and Gaussian measurements errors 
$\Xi \sim\cN(0,R)$ with $R = I_2$.  We draw a reference parameter $x^\dagger\sim\pi_0$ and an 
observation $y = h(x^\dagger)+\xi^\dagger$, where $\xi^\dagger$ is a realisation of the random measurement errors $\Xi$. 
Our numerical results are based on the realisation $x^\dagger=(-1.5621,-0.0021)^{\rm T}$ 
and $y=4.2297$.

We implement the preconditioned version of the Fokker--Planck based particle system {\eqref{eq:dynamic_FP1}}. Furthermore, we 
also consider its gradient-free formulation {\eqref{eq:dynamic_regFPf_nonlinear_2}} as well as the localised 
formulation {\eqref{eq:dynamic_regFPf_nonlinear_3}}. We compare the results to those from the corresponding interacting
Langevin sampler {\eqref{eq:corrected_BD}}, its {gradient-free formulation \eqref{eq:interacting_langevin_corrected_gradientfree}}, and 
its localised formulation {\eqref{eq:interacting_langevin_corrected}}, respectively.

We again use the MATLAB solver {\rm ode45} to time-step the deterministic Fokker--Planck dynamics \eqref{eq:dynamic_regFPf_nonlinear_2} 
and the Euler--Maruyama scheme with step-size $\Delta t = 0.0001$ for the interacting Langevin sampler \eqref{eq:interacting_langevin_corrected_localised}.  We use a Gaussian kernel (\ref{eq:Gauss_kernel}) with $B = \alpha P_0$ and $\alpha = 0.01$
for the preconditioned Fokker--Planck dynamics. The weights for the localised formulation 
\eqref{eq:weights_loc} are computed with $\gamma=0.5$ and $D= P_0\in\R^{2\times 2}$. All simulations use $M = 200$ particles {initialized by an i.i.d.~sample from the prior distribution $N(0,P_0)$.}


Figure \ref{fig:2d_2hills_FPPS_densities} 
shows the kernel density estimates resulting from the preconditioned Fokker--Planck dynamics, which demonstrates that the particle system is representing the target density. Furthermore, while the gradient-free formulation \eqref{eq:dynamic_regFPf_nonlinear_2} is getting pushed 
to one of the peaks, the localised formulation \eqref{eq:dynamic_regFPf_nonlinear_3} leads to an effectively improved approximation of the 
posterior density. 
The time evolution of the potential \eqref{eq:potential_FP} along the three different interacting particle approximations is displayed
in Figure \ref{fig:2d_2hills_FPPS_Vt}.

\begin{figure}[!htb]
	\begin{center}
	\includegraphics[width=1\textwidth]{./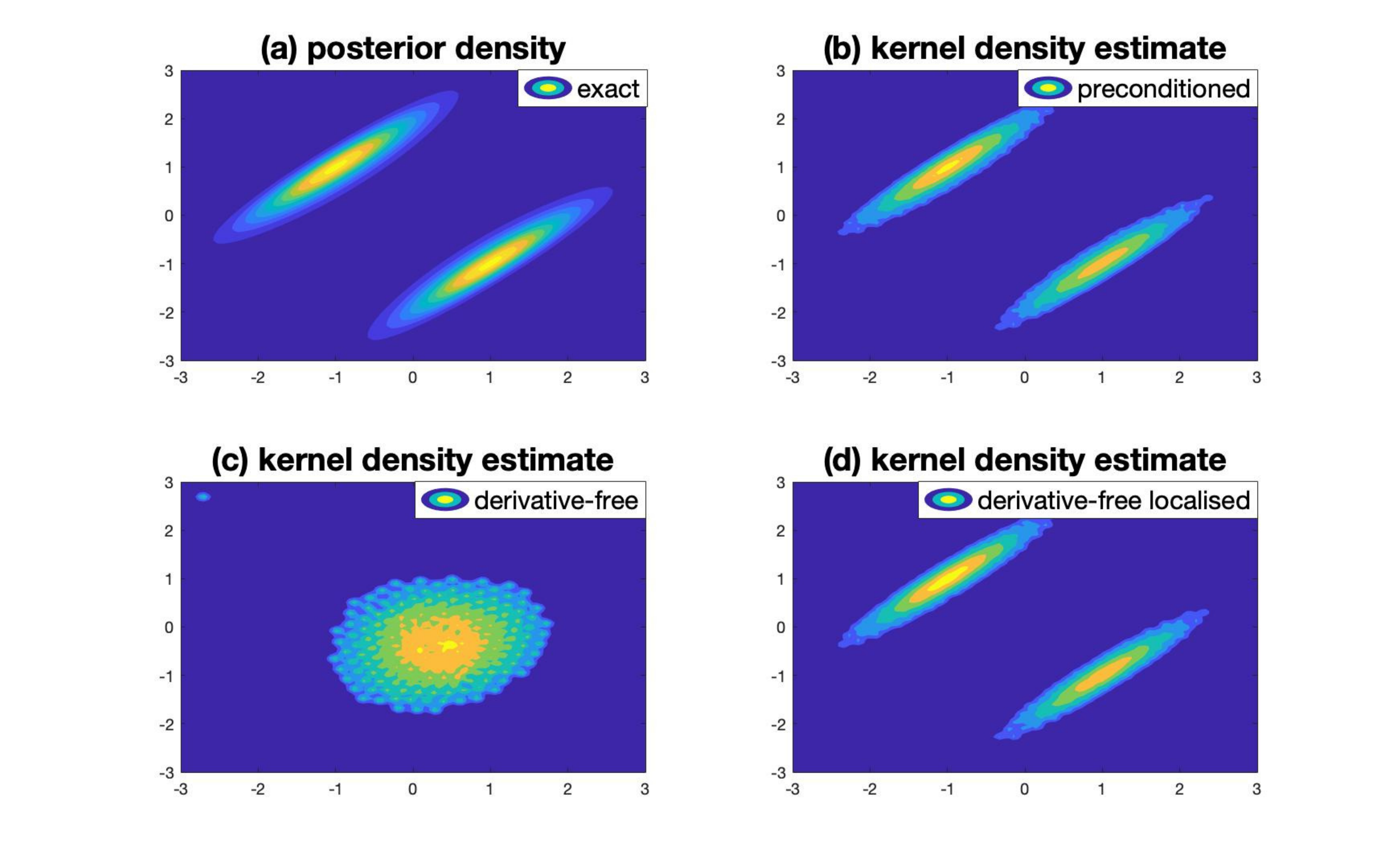}
	 \end{center}
    	\caption{Approximations to the posterior PDF from interacting Fokker--Planck dynamics: a) exact posterior PDF, b) fitted PDF from
	preconditioned dynamics using exact gradients, c) fitted PDF from preconditioned gradient-free dynamics, d) fitted PDF from localised gradient-free dynamics.}\label{fig:2d_2hills_FPPS_densities}
\end{figure}

\begin{figure}[!htb]
	\begin{center}
	\includegraphics[width=1\textwidth]{./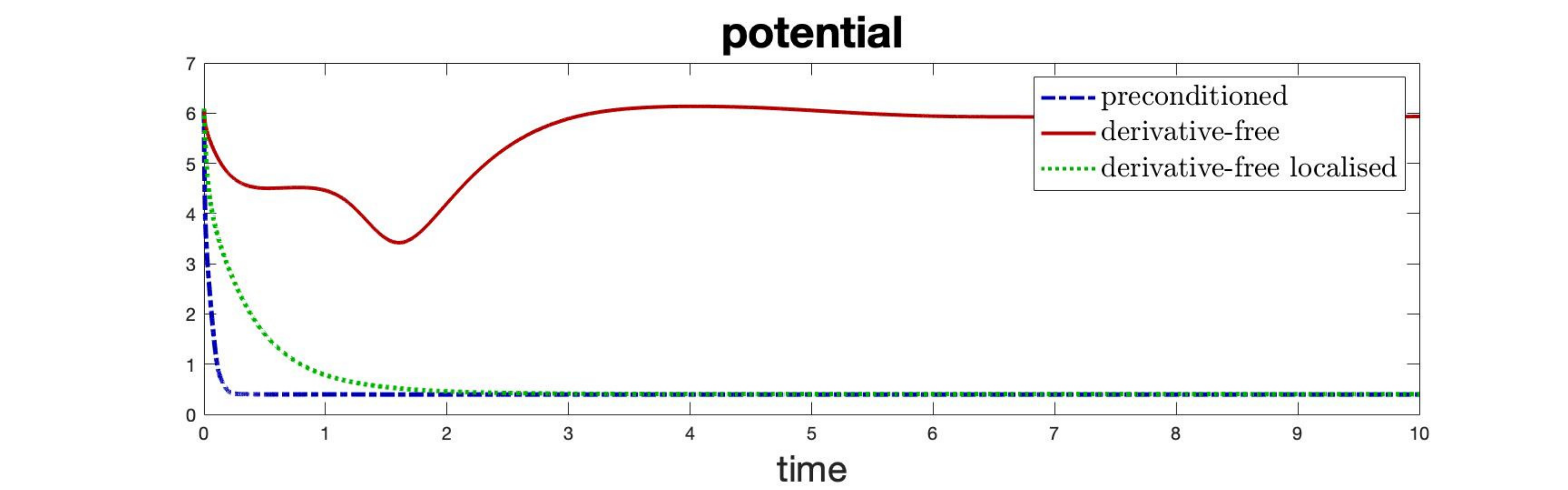}
    	\caption{Time evolution of potential function $\cV$ for different implementations of Fokker--Planck particle dynamics: (blue) preconditioned dynamics using exact gradients, (red) preconditioned gradient-free dynamics, (green) localised gradient-free dynamics.}\label{fig:2d_2hills_FPPS_Vt}
    \end{center}
\end{figure}

We present the variational derivative of the RKHS Kullback--Leibler divergence \eqref{eq:gradient_KL} resulting from the equilibrium particle positions $\{X_c^{(i)}\}$ as well as the weight function 
\begin{equation}
W(x) = \exp\left(\sum\limits_{j=1}^M\frac{k(x,X_{\rm c}^{(j)})}{\sum\limits_{l=1}^M k(X_{\rm c}^{(l)}X_{\rm c}^{(j)})}\right)
\end{equation}
(compare \eqref{eq:approx_limit}) corresponding to the equilibrium particle positions in Figure \ref{fig:2d_2hills_FPPS_var_derivative}.
We find that both the exact gradient method as well as the localised gradient-free formulation are leading to a variational derivative, 
which is nearly constant in the region of state space covered by the equilibrium particle positions $\{X_c^{(i)}\}$.

\begin{figure}[!htb]
	\begin{center}
	\includegraphics[width=1\textwidth]{./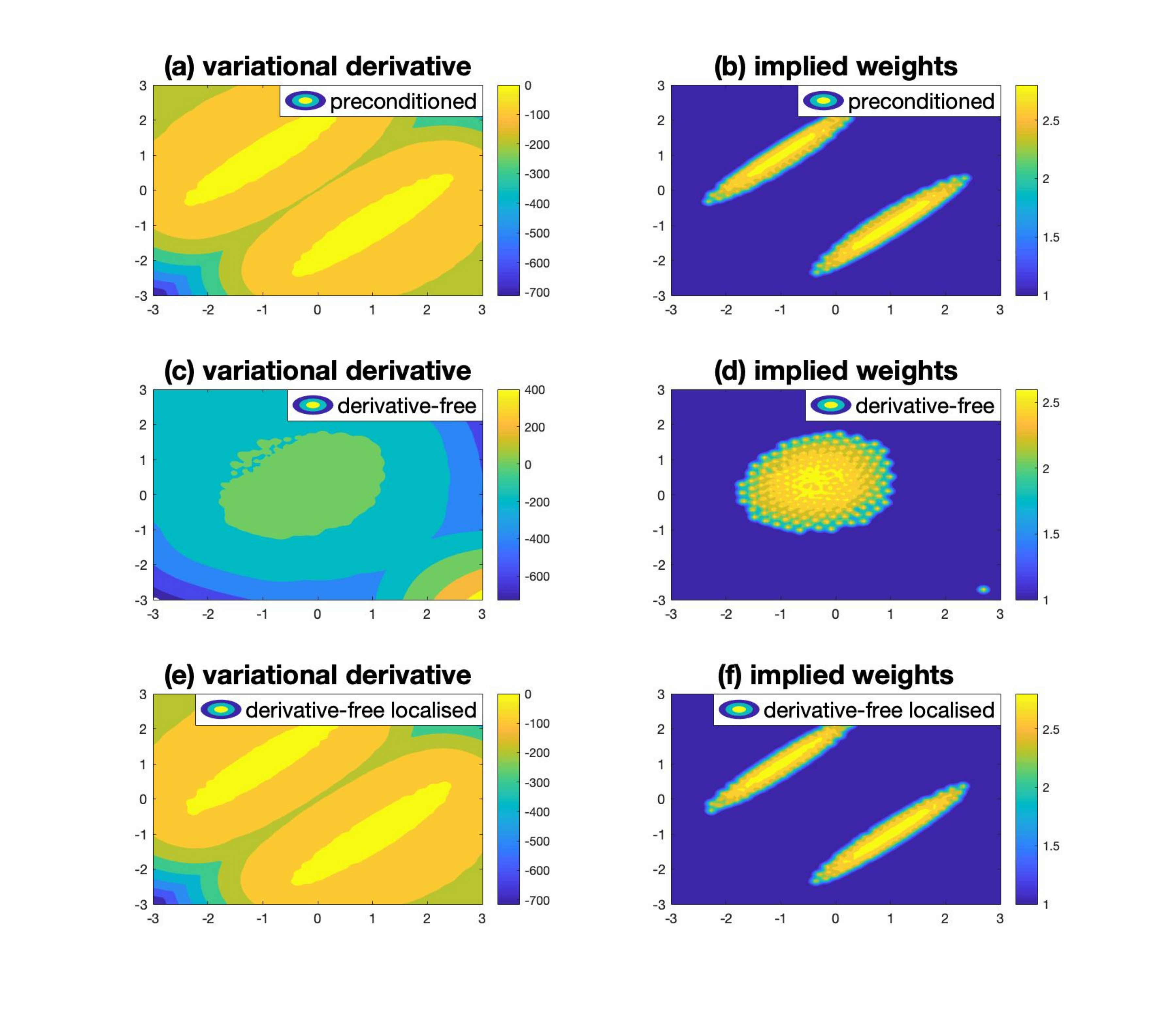}
    	\caption{Variational derivative (left) and implied weights (right) of the RKHS Kullback--Leibler divergence for different implementations of Fokker--Planck particle dynamics: (a)-(b) preconditioned dynamics using exact gradients, (c)-(d) preconditioned gradient-free dynamics, (e)-(f) localised gradient-free dynamics.}\label{fig:2d_2hills_FPPS_var_derivative}
    \end{center}
\end{figure}

Qualitatively similar results are also obtained from the corresponding implementations of the interacting Langevin dynamics. See Figure \ref{fig:2d_2hills_EKS_particles}. The gradient-free formulation shows again a focus into the direction of one of the peaks, 
while the localisation highly improves the approximation of the posterior distribution.

\begin{figure}[!htb]
	\includegraphics[width=1\textwidth]{./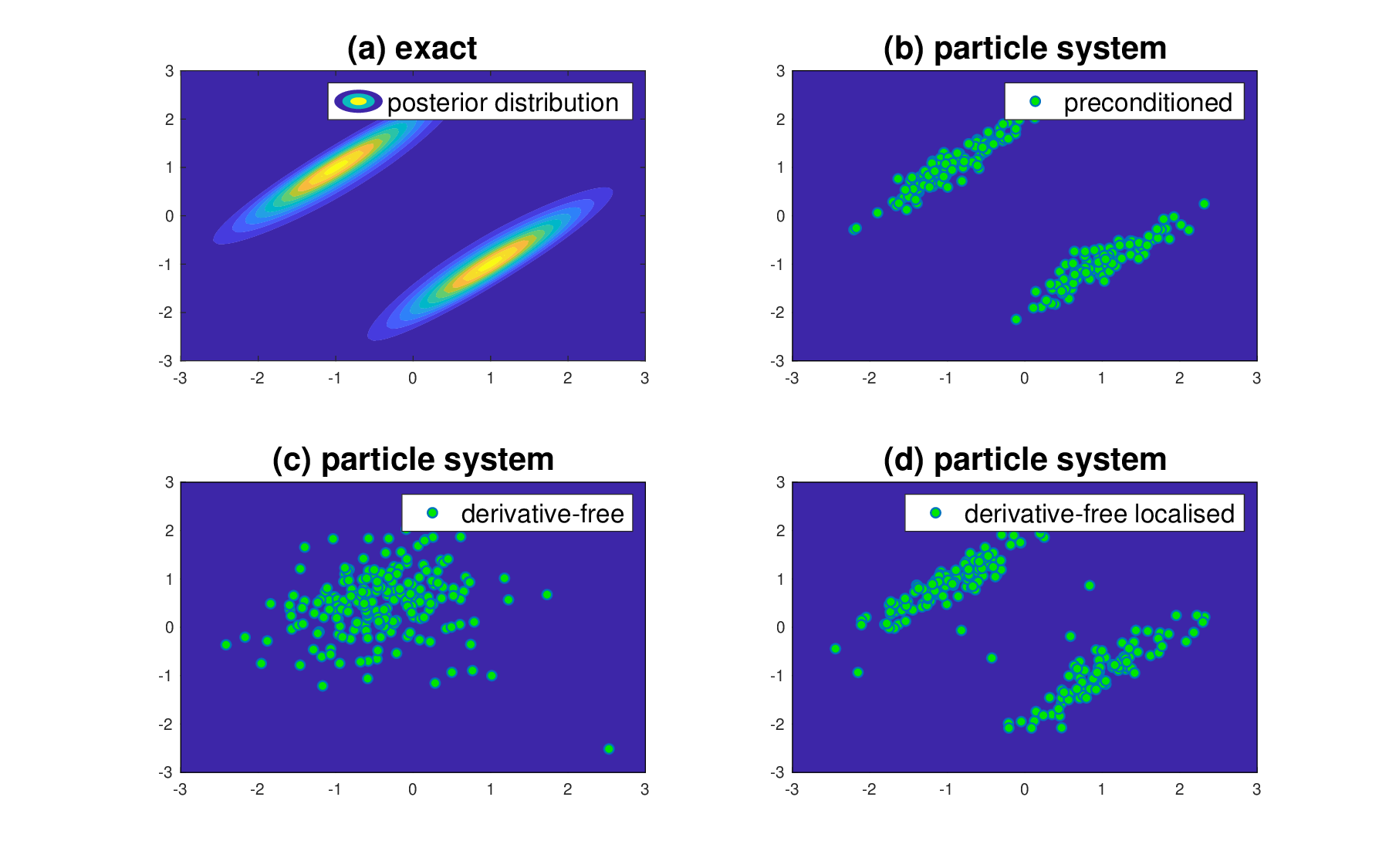}
    \caption{Approximations to the posterior PDF from interacting Langevin dynamics: a) exact posterior PDF, b) particle system from
	preconditioned dynamics using exact gradients, c) particle system from preconditioned gradient-free dynamics, d) particle system from localised gradient-free dynamics.}\label{fig:2d_2hills_EKS_particles}
\end{figure}

Figure \ref{fig:2d_2hills_estimate_different_gamma} shows the kernel density estimate resulting from the localised deterministic Fokker--Planck dynamics \eqref{eq:dynamic_regFPf_nonlinear_3} for different choices of the localisation scaling $\gamma>0$. We see that for too small choices of $\gamma$ the localisation effect is too strong and the particles move too slowly. The reason for this is that the particles do not find nearby particles to interact with. If,
on the other hand, the scaling parameter $\gamma$ is chosen too large, the localisation effect is too weak and the particles start 
concentrating on one of the two peaks. This effect results from the inaccurate approximation of the gradient of the forward problem. Figure \ref{fig:2d_2hills_potential_different_gamma} shows the time evolution of the potential function  \eqref{eq:potential_FP} 
for different scaling localisation parameter $\gamma$. We notice that the estimate for $\gamma=10$ is leading to a lower value of the 
potential compared to the estimate for $\gamma=0.1$ with $\gamma = 1$ being optimal. While the estimate for $\gamma = 10$ represents one of the peaks nearly 
perfectly while ignoring the second peak, the two other estimates approximates both peaks fairly well. {In summary, the parameter $\gamma$ scales the interaction between the particles through preconditioning. For nonlinear problems a low value of $\gamma$ is needed, as the particles have to be close to each other for a good derivative approximation. However, choosing a low value for $\gamma$ requires a higher number of particles $M$ since some particles might have only minor interactions with other particles resulting in a poor approximation of derivatives if $M$ is too small. Hence, one should choose $\gamma$ in a suitable way such that it decreases with $M$, e.g.~$\gamma(M) = c\cdot M^{-\delta}$ for $\delta>0$, where $\delta$ depends on the nonlinearity of the forward model and the dimension of the parameter space.}


\begin{figure}[!htb]
	\includegraphics[width=1\textwidth]{./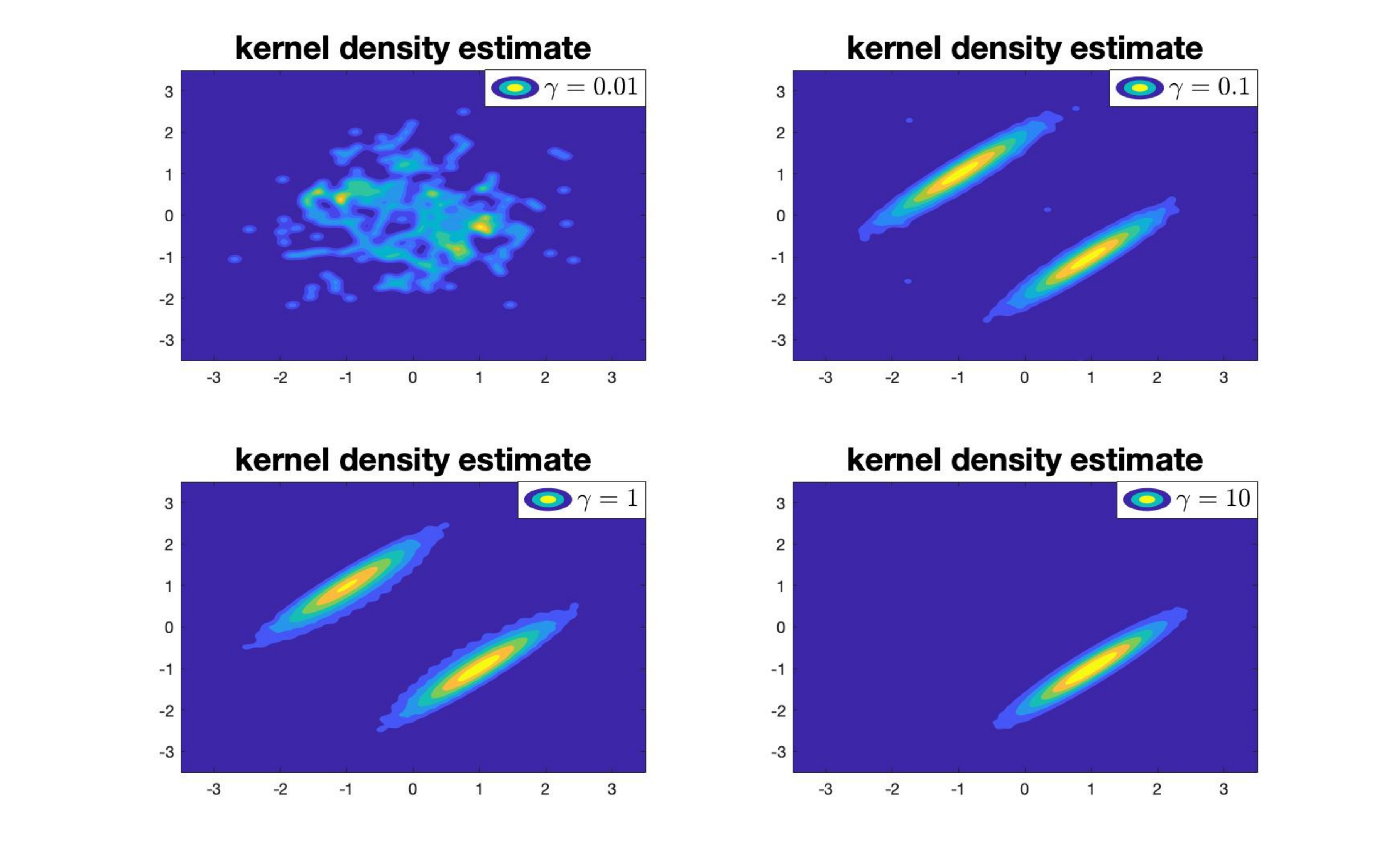}
    	\caption{Approximations to the posterior PDF from interacting Fokker--Planck dynamics: a) $\gamma=0.1$, b) $\gamma=1$, c) $\gamma=4$, d) $\gamma=10$.}\label{fig:2d_2hills_estimate_different_gamma}
\end{figure}
\begin{figure}[!htb]
	\includegraphics[width=1\textwidth]{./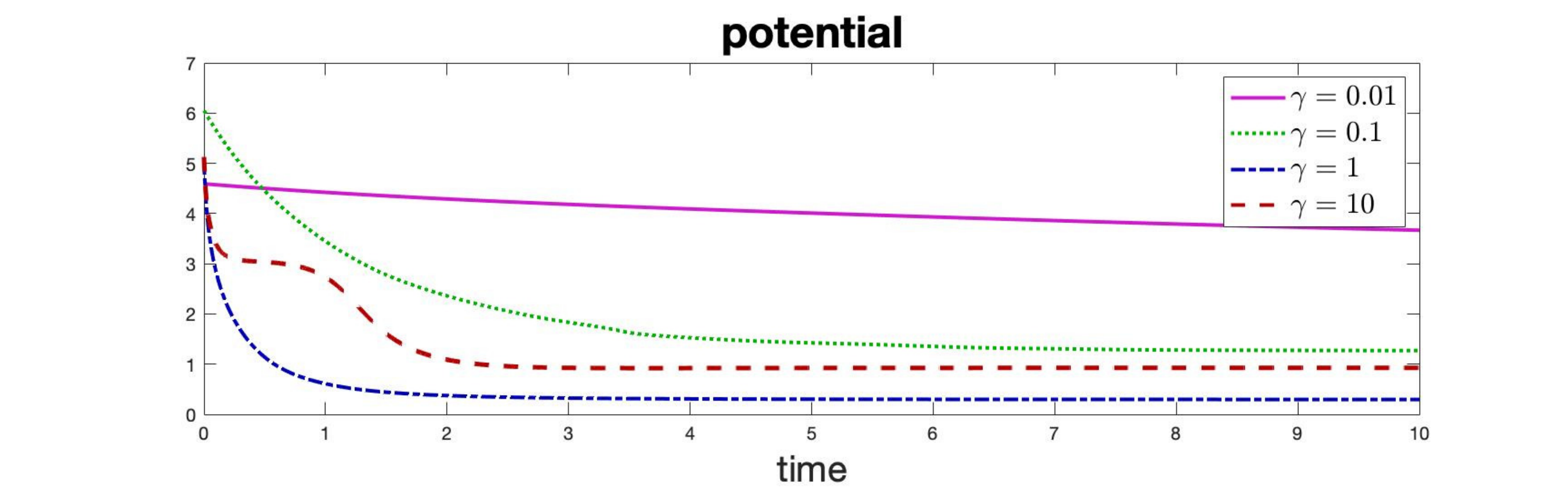}
    	\caption{Time evolution of the potential function $\cV$ for different choices of $\gamma$.}\label{fig:2d_2hills_potential_different_gamma}
\end{figure}

In summary, this example demonstrates the effectiveness of the localised gradient-free formulations both in its deterministic and stochastic
form. We also note that a careful choice of the parameter $\gamma$ is required and that smaller values of $\gamma$ require larger 
ensemble sizes $M$ in order to provide faithful gradient approximations.

\subsection{Scalability in high dimensions}\label{subsec:dimensionalscaling}

{
In the following we consider a simple toy example in order to study the behavior of the deterministic Fokker--Planck particle system \eqref{eq:dynamic_regFPf_nonlinear_2}  based on the RKHS approach. To do so, we consider a Gaussian process $GP(0,(-\Delta)^\tau)$ represented by the (truncated) KL expansion
\begin{equation}\label{eq:KL_expansion}
u(s,x) = \sum\limits_{k=1}^{N_x} x_k \psi_k(s),
\end{equation}
where $\Delta$ denotes the Laplacian operator over $\mathcal D = [0,1]$ equipped with Dirichlet boundary conditions, $x=(x_1,\dots,x_{N_x})^{\rm T}\in\R^{N_x}$ is a vector of independent Gaussian distributed random variables $\mathcal N(0,\lambda_k)$ with $\lambda_k=k^{-2\tau}$ and $\psi_k(s)=\sqrt{2\pi}\sin(2\pi s)$, \cite{sr:S15}. We consider the inverse problem of recovering the coefficients $x\in\R^{N_x}$ of one observed sampled path of the Gaussian process. The KL expansion will be truncated at index $N_x$, and the state space of the Gaussian process will be discretized on a uniform grid $\mathcal D_l\subset[0,1]$ with mesh size $h=2^{-l}$, this is the KL expansion will be evaluated at $N_y=2^l$ points. For fixed $N_x$ and fixed $N_y$, we can write the problem as linear inverse problem in the form of \eqref{eq:IP} where the forward model is defined by $h(\cdot) = A\cdot$, where the $k-th$ column of $A = A_{N_x,N_y}\in\R^{N_y\times N_x}$ consists of $(\psi_k(s_1), \dots,\psi(s_{N_y})^{\rm T}$, $s_i = i\cdot h$, $i=1,\dots N_y$. We set a prior to $X_0\sim\mathcal N(0,P_0)$ with $P_0\in\R^{N_x\times N_x}$ being a diagonal matrix with entries $\lambda_k,\ k=1,\dots,N_x$ and $\tau=1$ and assume Gaussian noise $N(0,R)$ with $R=I_{N_y}$. The reference data $y$ will be constructed by drawing $x_k^\dagger\sim\mathcal N(0,\lambda_k)$ and computing $y_{N_x,N_y}=A_{N_x,N_y}x^\dagger$ with $x^\dagger = (x_1^\dagger,\dots,x_{N_x}^\dagger)^{\rm T}$. }
 
 {
We will analyze the performance of the deterministic Fokker--Planck dynamics \eqref{eq:dynamic_regFPf_nonlinear_2} for increasing the dimension in both $N_x$ and $N_y$.  We consider two settings: 
\begin{itemize}
\item In the first setting we keep the truncation of the KL expansion fixed to $N_x=4$ and increase the dimension $N_y$ of the observations by choosing $N_y\in\{16,64,256\}$.
\item In the second setting we increase the truncation of the KL expansion by choosing the index $N_x\in\{4,6,8\}$ and keeping the dimension of the observations fixed to $N_y = 64$.
\end{itemize}
}

{
In our numerical results, we initialize the particle system by an i.i.d. sample of $N(0,P_0)$ and solve
\begin{equation}
\frac{\rm d}{{\rm d}t}X_t^{(i)} = P_t^{xx}\cdot F_t(X_t^{(i)})
\end{equation}
for $F$ defined by \eqref{eq:driftterm}, where we consider Gaussian kernels \eqref{eq:Gauss_kernel} with different choices of $B$. In particular, we will choose $B$ such that it scales with the dimension $N_x$ and the ensemble size $M$ in order to obtain good approximation results i.e., we test $$B = \frac{c_\delta}{M^{\delta}}\diag((P_0)_{ii}),\ B_\ast = \frac{c_\delta}{M^{\delta}}\diag((P_\ast)_{ii})\ \text{and}\ B_t = \frac{c_\delta}{M^{\delta}}\diag((P_t^{xx})_{ii}),$$ 
where we define
\begin{equation} \label{eq:bandwdth}
\delta = \frac{1}{N_x+4}\quad\text{and}\quad c_\delta = \left(\frac{4}{N_x+2}\right)^\delta.
\end{equation}
These choices of kernels correspond to the product of univariate kernels in each dimensions with optimal bandwidth for Gaussian kernels minimizing the asymptotic mean integrated squared error \cite[Section~6.3.1]{sw:S92}. While in the choice of $B_\ast$ we assume to have access to the theoretical variance $\sigma_i^2 = (P_\ast)_{ii}$ of each component, we are using approximation $\widehat\sigma_i^2 = (P_0)_{ii}^2$ for the choice $B$ and $\widehat\sigma_i^2 = (P_t^{xx})_{ii}$ for the choice $B_t$ respectively.
}

{Testing these choices of kernels, we observe that for $B$ the approximation results are getting worse if the prior covariance $P_0$ is far away from the target covariance $P_\ast$ as $\widehat\sigma_i^2= (P_0)_{ii}^2$ is overestimating the variance of the posterior. For the choice $B_\ast$ we obtain high accurate approximation results, however, in practical situations it is an infeasible choice as the true covariance is typically unknown. The third choice $B_t$ corresponds to the adaptive kernel choice introduced in Remark~\ref{rem:curse_of_dim}. This choice helps by approximating $P_\ast$ through the particle system and updating the underlying RKHS adaptively. In our numerical results, we observe for increasing dimension $N_x$ but fixed observations $N_y$ that the variance $\sigma_i$ in each component gets underestimated as the particle system collapses, such that the resulting approximation fails for increasing $N_x$.
}


\begin{figure}[!htb]
	\includegraphics[width=1\textwidth]{./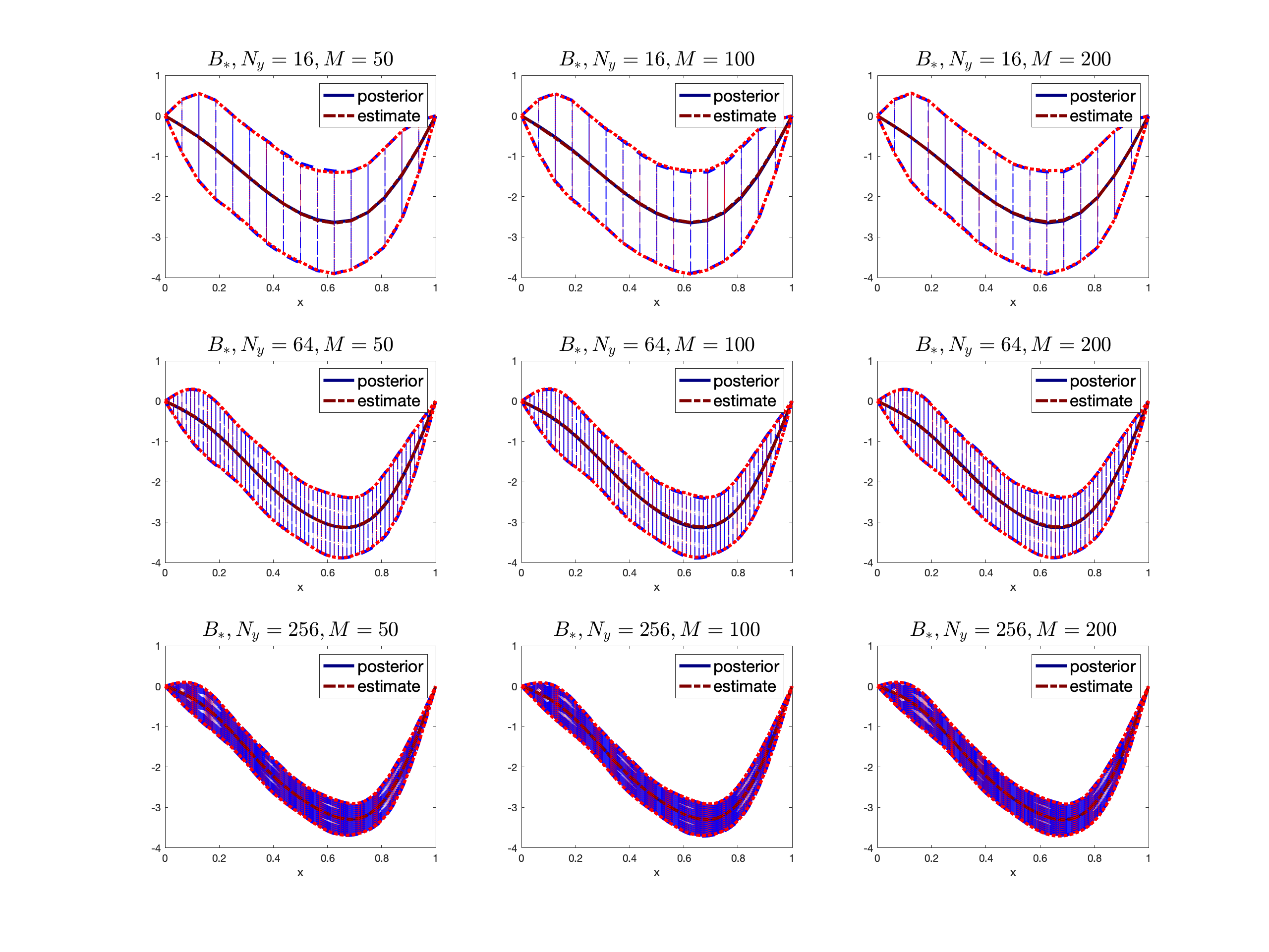}
    	\caption{{Approximations to the unknown parameter from the deterministic Fokker--Planck dynamics with $B_\ast$ for different choices of $N_y\in\{16,64,256\}$, $M\in\{50,100,200\}$ and fixed $N_x=4$.}}\label{fig:scaling_l_B2}
\end{figure}

\begin{figure}[!htb]
	\includegraphics[width=1\textwidth]{./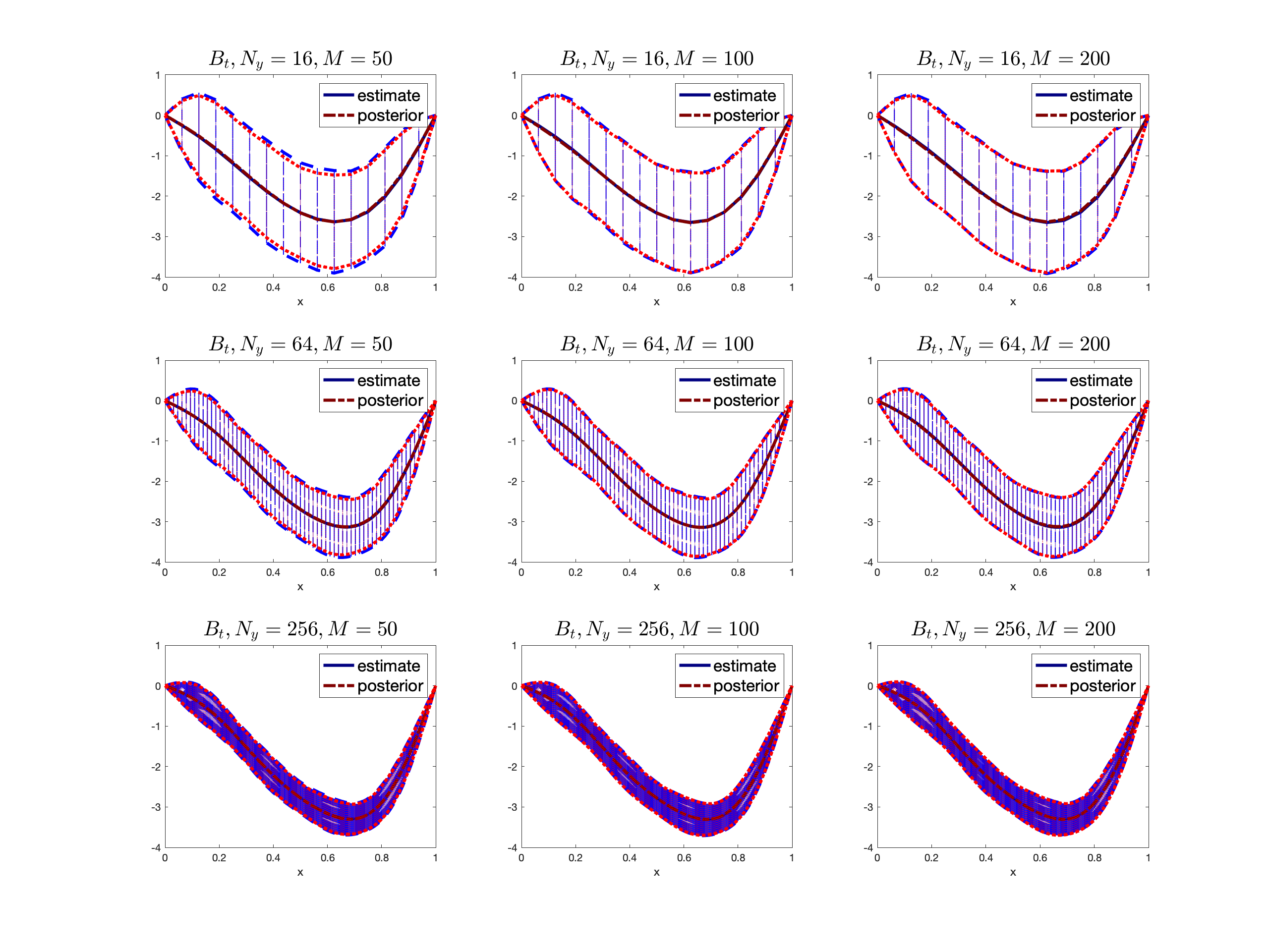}
    	\caption{{Approximations to the unknown parameter from the deterministic Fokker--Planck dynamics with $B_t$ for different choices of $N_y\in\{16,64,256\}$, $M\in\{50,100,200\}$ and fixed $N_x=4$.}}\label{fig:scaling_l_B3}
\end{figure}

\begin{table}
\begin{center}
\begin{tabular}{|c|c| c c c|c|}
\hline
$N_y \setminus M$ & & $50$ & $100$ & $200$ & theoretical\\
\hline
 $16$ & & $0.754$ & $0.707$ & $0.668$ & $0.396$\\%
 $64$ & $B$ & $0.703$  & $0.653$ &  $0.606$& $0.151$\\%
 $256$ & & $0.728$ & $0.657$  & $0.605$ & $0.046$\\%
\hline
$16$ & &  $0.3873$ & $0.404$ & $0.399$ & $0.396$\\
 $64$ & $B_\ast$ & $0.151$ & $0.152$  & $0.151$ & $0.151$\\
 $256$ & & $0.046$ & $0.046$ & $0.046$ & $0.046$\\
\hline 
$16$ & & $0.333$   & $0.371$ & $0.386$ & $0.396$\\
$64$ & $B_t$ & $0.130$  & $0.142$ & $0.148$ & $0.151$\\
$256$ & & $0.041$  & $0.045$ & $0.045$ & $0.046$\\
\hline
 \end{tabular}
\bigskip
\caption{Trace of the estimated covariance in comparison for the three different choices of $\{B,B_\ast,B_t\}$ and $N_y\in\{16,64,256\}$ and fixed $N_x=4$.}
\label{table:trace_l}
\end{center}
\end{table}

\begin{figure}[!htb]
	\includegraphics[width=1\textwidth]{./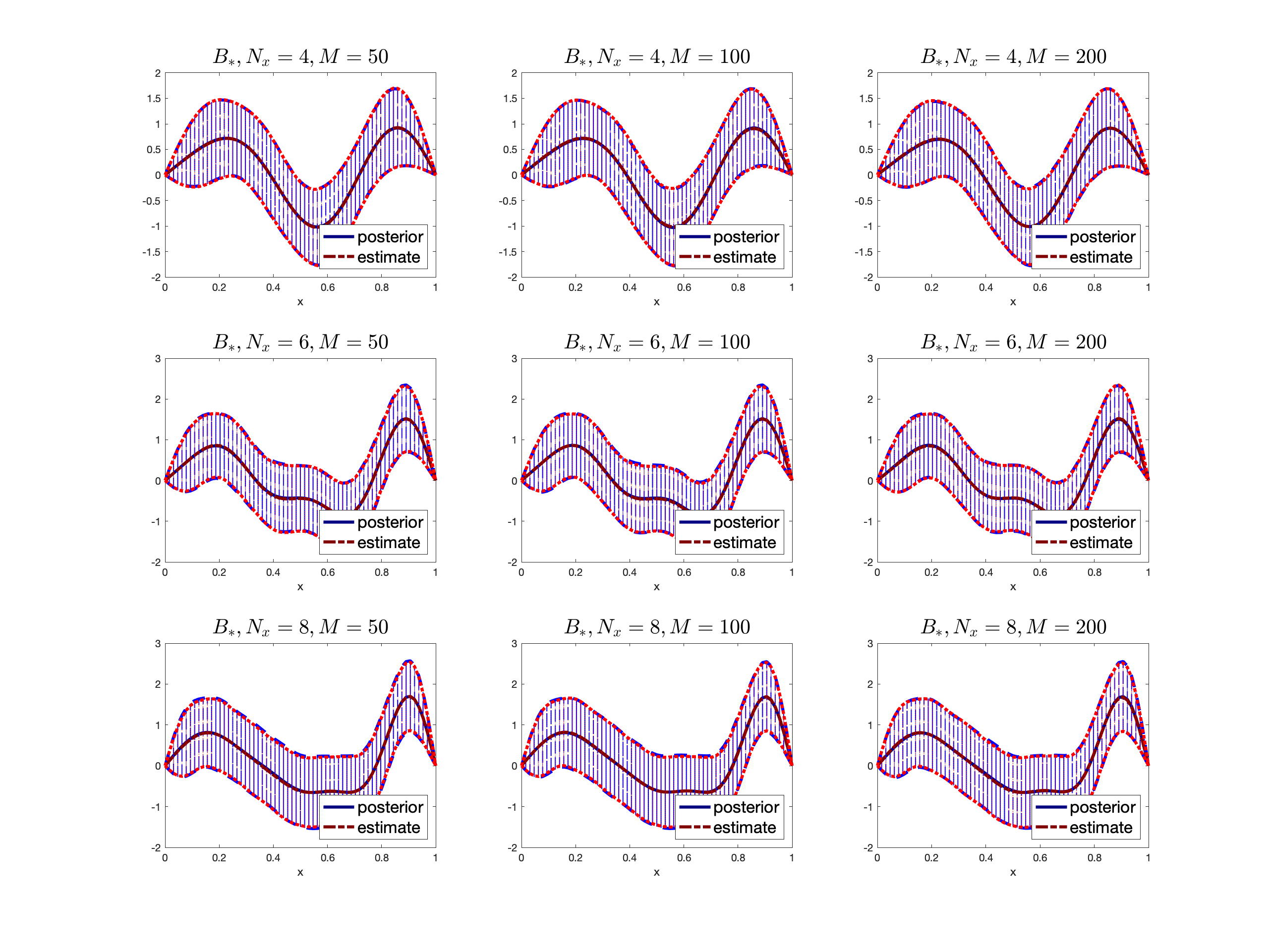}
    	\caption{{Approximations to the unknown parameter from the deterministic Fokker--Planck dynamics with $B_\ast$ for different choices of $N_x\in\{4,6,8\}$, $M\in\{50,100,200\}$ and fixed observations $N_y=64$.}}\label{fig:scaling_Nx_B2}
\end{figure}

\begin{figure}[!htb]
	\includegraphics[width=1\textwidth]{./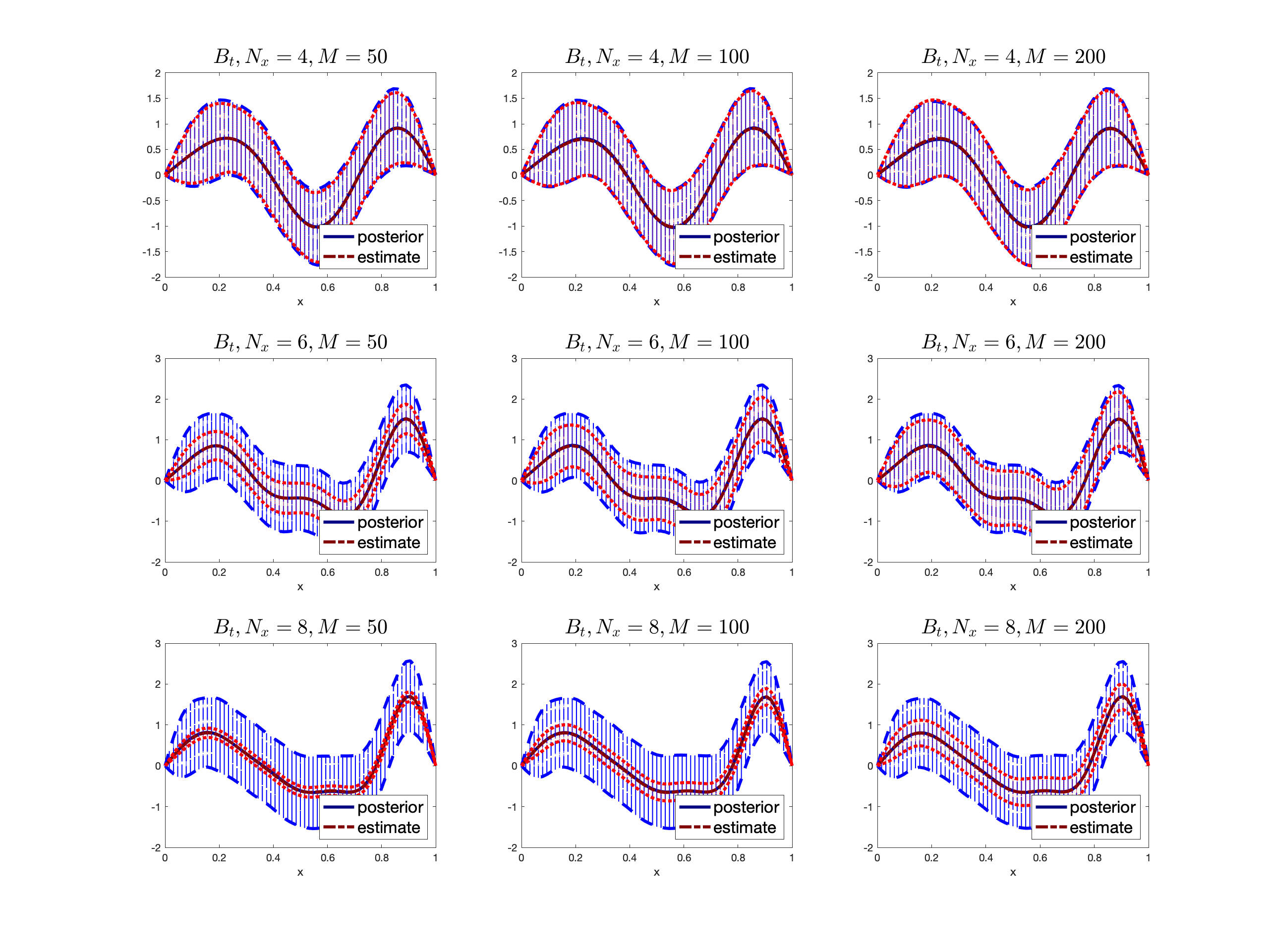}
    	\caption{{Approximations to the unknown parameter from the deterministic Fokker--Planck dynamics with $B_t$ for different choices of $N_x\in\{4,6,8\}$, $M\in\{50,100,200\}$ and fixed observations $N_y=64$.}}\label{fig:scaling_Nx_B3}
\end{figure}

\begin{table}
\begin{center}
\begin{tabular}{|c|c| c c c|c|}
\hline
$N_x \setminus M$ & & $50$ & $100$ & $200$ & theoretical\\
\hline
$4$ & &  $0.727$ & $0.650$  & $0.602$ & $0.151$\\
 $6$ & $B_1$ & $0.873$  &  $0.802$  &  $0.768$ & $0.191$\\
 $8$ & & $0.991$ &  $0.939$  &  $0.874$ & $0.217$\\
\hline
$4$ & &  $0.152$ & $0.152$ & $0.151$ & $0.151$\\
 $6$ & $B_2$ & $0.183$ & $0.186$ & $0.188$ & $0.191$\\
 $8$ & & $0.206$ & $0.207$  & $0.210$ & $0.217$\\
\hline 
$4$ & & $0.124$ &  $0.139$ & $0.147$ & $0.151$\\
$6$ & $B_3$ & $0.038$ & $0.080$ & $0.124$ & $0.191$\\
$8$ & & $0.004$ & $0.012$ & $0.030$ & $0.217$\\
\hline
 \end{tabular}
\bigskip
\caption{Trace of the estimated covariance in comparison for the three different choices of $\{B,B_\ast,B_t\}$, $N_x\in\{4,6,8\}$ and fixed $N_y=64$.}
\label{table:trace_Nx}
\end{center}
\end{table}

{
To illustrate our numerical results, we have created for both settings a table where we compare the trace of the estimated covariance for the posterior distribution. In Table~\ref{table:trace_l} we keep $N_x=4$ fixed and increase the number of observations $N_y$ from $16$ to $256$, whereas in Table~\ref{table:trace_Nx} we keep $N_y=64$ fixed and increase the dimension$N_x$ of the truncation from $4$ to $8$. The covariance has been estimated by solving \eqref{eq:dynamic_regFPf_nonlinear_2} up to a fixed time of $T=1000$ and producing samples according to \eqref{eq:approx_limit} using the final particle system. The ODE has been solved again with the MATLAB solver {\rm ode45}. For keeping the size $N_x$ of the parameter fixed, we observe, that we estimate the trace of the posterior covariance closely exact for $B_\ast$ and $B_t$ independently of the choices of $N_y\in\{16,64,256\}$, while $B$ overestimates for each choice. In contrast, we find that for fixed observations $N_y=64$ the choice of $B_t$ needs to include a larger ensemble size for increasing parameter dimension in order to estimate the trace of the posterior covariance correctly. The choice $B_\ast$ performs again very well, whereas the choice $B$ fails. These results can also be observed in Figure~\ref{fig:scaling_l_B2}-Figure~\ref{fig:scaling_l_B3} for the scaling in $N_y$ and in Figure~\ref{fig:scaling_Nx_B2}-Figure~\ref{fig:scaling_Nx_B3} for the scaling in $N_x$, where we compare the evaluation of the KL expansion \eqref{eq:KL_expansion} of the kernel based methods to the evaluation of the posterior distribution. We can see again that for the choice $B_t$, the sample size has to be increased for increaing $N_x$, while for fixed $N_x$ and increasing $N_y$ the obtained approximation results are stable.}

{
In summary, we observe that the deterministic Fokker--Planck particle system is a promising method as long as it is possible to choose RKHS representing the posterior distribution well. If there is no information available about the covariance structure of the posterior, it is a challenging task to choose a satisfying kernel. For example one could try to tune the choice $B_t$ by applying variance inflation or averaging over the last iterations in order to obtain a more accurate estimate for the variance through the particle system.
}


\subsection{High dimensional example}\label{subsec:infdim}

We return to the one-dimensional elliptic boundary-value problem from Section \ref{subsec:2d} now in the form of
\begin{equation}\label{eq:darcyflow}
-\frac{{\rm d}}{{\rm d}s}\left(\exp(u(s))\frac{{\rm d}}{{\rm d}s}p(s)\right) = 1, \quad s\in[0,1].
\end{equation}
While in the first example we had two unknown parameters arising from the unknown permeability constant $a = \exp(x_1)$ and the boundary condition $p(1) = x_2$, 
we consider here (\ref{eq:darcyflow}) for given boundary conditions $p(0)=p(1)=0$ and unknown permeability function $a(s) = \exp(u(s))$ with $u \in L^\infty([0,1])$. 
Note that for any $u\in L^\infty([0,1])$ there exists a solution  $p\in H_0^1([0,1];\R)$. Inspired by \cite{AGFHWLAS2019} we infer the coefficients 
of a Karhunen--Lo\`{e}ve (KL) expansion of $u$. More precisely, we assume that the prior of the unknown $u$ is given by the Gaussian process 
${\rm GP}(0,(-\Delta)^{-\tau})$. Thus, we can write $u\sim {\rm GP}(0,(-\Delta)^{-\tau})$ {again a.s.~through the KL expansion \eqref{eq:KL_expansion}.}

The inverse problem is to recover the coefficients $(x_k)_{k\in\N}$ corresponding to the KL expansion \eqref{eq:KL_expansion} given discrete noisy observations 
$y$ of \eqref{eq:darcyflow}, that is, $y = (\cO\circ G)(u(s,x))+\xi$. Here $G:\ L^\infty([0,1])\to H_0^1([0,1];\R)$ denotes the solution operator of \eqref{eq:darcyflow} and $\cO:\ H_0^1([0,1];\R)\to\R^{N_y}$ denotes the observation operator, which provides function values at $N_y$ equidistant observation points in $[0,1]$, that is, 
$z(\cdot)\in H_0^1([0,1];\R)\mapsto \cO(z(\cdot)) = (z(s_1),\dots,z(s_{N_y}))^{\rm T}$, $s_i = \frac{i}{N_y},\ i=1,\dots,N_y$. 

We will truncate the KL expansion \eqref{eq:KL_expansion} at index $N_x$, that is, the unknown parameters are 
given by $x = (x_1,\dots,x_{N_x})^{\rm T}\in\R^{N_x}$ and we define the forward map $h$ by
\begin{equation}
h:\ {\R}^{N_x}\to{\R}^{N_y}, \quad \text{with}\quad x\mapsto u(\cdot,x)\mapsto (\cO\circ G)(u(\cdot,x)).
\end{equation}

For our numerical results we replace $G$ by an numerical solution operator for \eqref{eq:darcyflow} on the grid $\cD\subset[0,1]$ with mesh size $h=2^{-8}$ 
and restrict $u(\cdot,x)$ to the computational grid $s_l = l\,h$, $l=1,\ldots,2^8-1$.

We set the prior to $X_0 \sim \cN(0,P_0)$, where $P_0\in\R^{N_x\times N_x}$ is diagonal matrix with entries $\lambda_k$, $k=1,\dots,N_x$ and $\tau = 1.5$, 
The measurement errors are mean zero Gaussian, that is, $\Xi \sim \cN(0,R)$ with $R= 0.01\cdot I_{N_y}$. The resulting inverse problem is of the form
(\ref{eq:IP}). We construct our reference data $y$ by drawing $x_k^\dagger\sim\cN(0,\lambda_k)$ for $k\le4$ and set ${x_k^\dagger}=0$ for $k>4$. We finally 
draw a measurement error $\xi^\dagger\sim\cN(0,R)$ and compute $y=h(x^\dagger)+\xi^\dagger$. In our numerical experiments, 
we truncate the KL expansion \eqref{eq:KL_expansion} at $N_x = 32$ and observe the solution $p(s)$ of \eqref{eq:darcyflow} at $N_y = 16$ equidistant grid points.

We again and use the MATLAB solver {\rm ode45} to time-step the deterministic Fokker--Planck dynamics \eqref{eq:dynamic_regFPf_nonlinear_2} 
and the Euler--Maruyama scheme {this time with an adaptive step-size $\Delta t_k \le 0.1/\beta_k$} for the interacting Langevin sampler \eqref{eq:interacting_langevin_corrected_localised}. {Here, $\beta_k$ is chosen such that $$ \beta_k = \max\{\|P_{t_k}^{xh} R^{-1}(h(X_{t_k}^{(i)})-y)+P_{t_k}^{xx}P_0^{-1}(X_{t_k}^{(i)}-\bar x_0)\|,\quad i = 1,\dots, M\}. $$}

This time we use a Gaussian kernel (\ref{eq:Gauss_kernel}) depending on the current empirical covariance matrix, {that is,  
$B_t = c_\delta/M^{\delta} \diag((P_t^{xx})_{ii})$, with $\delta$ and $c_\delta$ defined in \eqref{eq:bandwdth}, for the preconditioned Fokker--Planck dynamics. To reduce the issue of underestimating the variance through the choice $B_t$, we have fixed adapting the kernel $B_t$ at time $t=1$. This is, we choose $B=B_t$ for $t\le1$ and $B=B_1$ for $t\ge1$.}

Since our setup is quite similar to the high dimensional example in \cite{AGFHWLAS2019}, we will report on the numerical results using 
similar summary statistics.

We run both the deterministic Fokker--Planck dynamics \eqref{eq:dynamic_regFPf_nonlinear_2} as well as the interacting Langevin dynamic \eqref{eq:interacting_langevin_corrected_localised} {up to a fixed time of $T = 100$ with the initial ensemble  drawn i.i.d.~from the prior distribution $N(0,P_0)$.} 
To analyse the numerical results, we construct an empirical approximation to the posterior $X\mid y$ by a sample of size 
$L = 512$ for both methods. For the deterministic Fokker--Planck dynamics, we use the particle system at final time and produced samples according to \eqref{eq:approx_limit}. 
In the case of the Langevin sampler, we use the temporal evolution paths of the particles to collect the required total number samples. 
This has been achieved by adding the current ensemble of particles every {$10$} time steps to the already collected samples once the dynamics can be considered as
equilibrated. We compare our results with the posterior approximation resulting from a Random Walk Metropolis Hastings algorithm with preconditioned Crank--Nicolson 
(pCN) proposal \cite{sw:C13}, that is, we propose for given state $X_k$
\begin{equation}
\hat X_{k+1}\sim \cN(\sqrt{1-s^2}X_k,s^2P_0),
\end{equation}
where $s$ is a step size parameter. We set the step size to $s=0.07$, which results in an acceptance rate of approximately $25\%$ \cite{sw:RGG97}.

{As additional experiment we test a sequential Monte Carlo method (SMC) \cite{sw:DDJ2006, sw:KBJ2014}, which we combine with the interacting Langevin dynamic \eqref{eq:interacting_langevin_corrected_gradientfree}.  In particular, we initialize by $M$ particles $(X_0^{(i)})$ drawn from $\pi_0$, compute weights $W_0^{(i)}=1/M,\ i=1,\dots,M$ and proceed as follows for $n=1,\dots,N$:
\begin{itemize}
\item Importance weights: update weights by $W_n^{(i)} \propto W_{n-1}^{(i)} \cdot \pi_n$, with $\sum\limits_{i=1}^M W_n^{(i)} = 1$ and $\pi_n(x) = \frac{n}{N}\|y-h(x)\|_{R}^2$.
\item If the effective sample size $(\sum\limits_{i=1}^M (W_n^{(i)})^2)^{-1}<M_{\rm{tol}}$, we resample according to the weights $(W_n^{(i)})$.
\item we update the particles $X_{n-1}^{(i)} \mapsto X_n^{(i)}$ by running the interacting Langevin dynamic with scaled drift into direction of the data initialized by $X_{n-1}^{(i)}$ and up to time $t_n$:
\begin{subequations}\label{eq:interacting_langevin_corrected_gradientfree_SMC}
\begin{align}
{\rm d}X_t^{(i)} &= -\left\{\frac{n}{N}P_t^{xh}R^{-1}(h(X_t^{(i)})-y) + P_t^{xx}P_0^{-1}(X_t^{(i)}-\overline{x}_0) \right\}{\rm d}t\\
&\qquad \qquad 
+\,\nabla_{x^{(i)}} \cdot P_t^{xx}\,{\rm d}t+\sqrt{2}(P_t^{xx})^{1/2}\,{\rm d}W_t^{(i)}.
\end{align}
\end{subequations}
\end{itemize}
We compare the SMC with the interacting Langevin dynamic \eqref{eq:interacting_langevin_corrected_gradientfree} by running both methods up to a final time $T=0.5$ with $M=512$ particles. The SMC method will be simulated for $N=250$, with $t_n = 0.002$, $n=1,\dots,N$ and $M_{\rm tol} = 256$, such that it runs up to final time $T=0.5$, too.
}

\begin{figure}[!htb]
	\includegraphics[width=1\textwidth]{./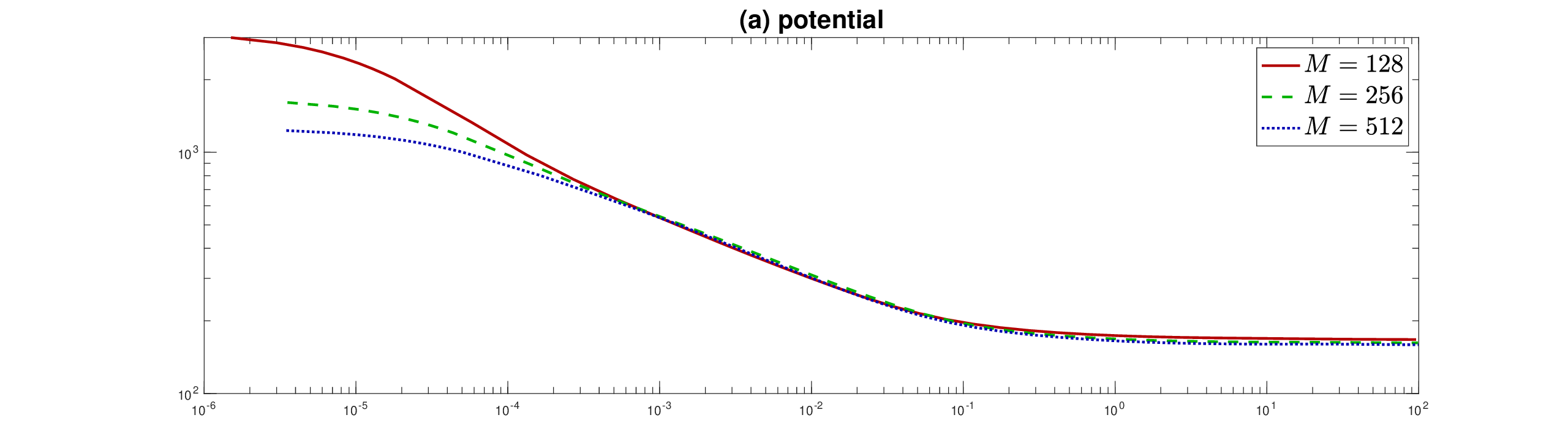}
    	\caption{{Time evolution of the potential function $\cV$ for different choices of $M$ (a).}}\label{fig:infdim_potential_different_alpha}
\end{figure}

{Figure \ref{fig:infdim_potential_different_alpha} shows the time evolution of the potential function \eqref{eq:potential_FP} for different choices of ensemble sizes $M$. There is no crucial sensitivity on those parameters detectable as all parameter choices 
result in quite a similar behaviour. In contrast to the temporal behaviour of the potential, we can see significant differences in the particle distributions by viewing scatter plots. }
{Figure \ref{fig:infdim_particle_system_different_M}  shows samples resulting from different choices of ensemble size $M$, which shows that a large ensemble size is necessary to produce good approximations to the posterior distribution.
} 


In the case of the Langevin sampler the effect of the correction term \eqref{eq:correction_term} introduced in \cite{sr:NR19} in the case of small ensemble sizes $M$
can be clearly detected. While the resulting samples without correction term concentrates in a small area, see Figure \ref{fig:infdim_EKS_sample_different_M}, 
the resulting sample with corrected dynamics fit the target distribution quite well for an ensemble size $M=16$ already; 
see Figure \ref{fig:infdim_EKS_corrected_sample_different_M}. The difference can also be seen in the time evolution of the spread of the ensemble over time
\begin{equation}
e_t := \frac1M\sum\limits_{i=1}^M |X_t^{(i)}-\bar X_t|^2,
\end{equation}
as shown in Figure \ref{fig:infdim_EKS_spread}. 
{The particles of the ensemble stay spread for different choices of ensemble sizes under the corrected dynamics, while we can see again the concentration effect for the uncorrected case. Further, we compare the resulting parameter 
estimation for both the deterministic as well as the stochastic method. The plot on the left in Figure \ref{fig:infdim_parameter_est} shows the resulting estimate
 for the deterministic version, that is, the evaluation of the truncated KL expansion \eqref{eq:KL_expansion}
\begin{equation}
u^{(i)} = u(\cdot, X^{(i)}) = \sum\limits_{k=1}^{N_x} X_k^{(i)}\psi_k(\cdot).
\end{equation} 
Similarly, we can see the estimates resulting from the stochastic method in the plot on the right. While the stochastic method fits the mean corresponding to the Random Walk Metropolis Hastings algorithms fairly well, the deterministic method seems to need a higher ensemble size to find the posterior distribution.}


{We close the discussion by analyzing the effect of the incorporation of  the SMC method in the interacting Langevin dynamics. We find that the inclusion of the resampling can accelerate the convergence to the posterior distribution. This can firstly be seen in the spread over time, Figure~\ref{fig:infdim_SMC_spread}, where we see that the occuring resampling is shifting the particle system into direction of the posterior distribution. In Figure~\ref{fig:infdim_particle_system_SMC} we see that already after final time $T=0.5$ the SMC method produces very good approximations of the posterior distribution, while the interacting Langevin dynamics still spreads too much. This result can also be seen in the parameter estimation in Figure~\ref{fig:infdim_parameter_est_SMC}.}

To summarise the numerical experiment, we have seen that in the case of the deterministic Fokker--Planck dynamics the choice of the kernel is crucial. While in the low dimensional examples it was enough to tune the parameter $\alpha$ for the kernel covariance $B = \alpha P_0^{xx}$, we had to introduce a time--dependent 
kernel $B_t = \alpha P_t^{xx}$ in the high dimensional example.  We have demonstrated the improvement of the interacting Langevin dynamics through 
the correction term \eqref{eq:correction_term}. While it is enough to chose a small ensemble size for the corrected sampler, we have to increase the ensemble size 
to $M \gg N_x$ in the method without correction.


\begin{figure}[!htb]
	\includegraphics[width=1\textwidth]{./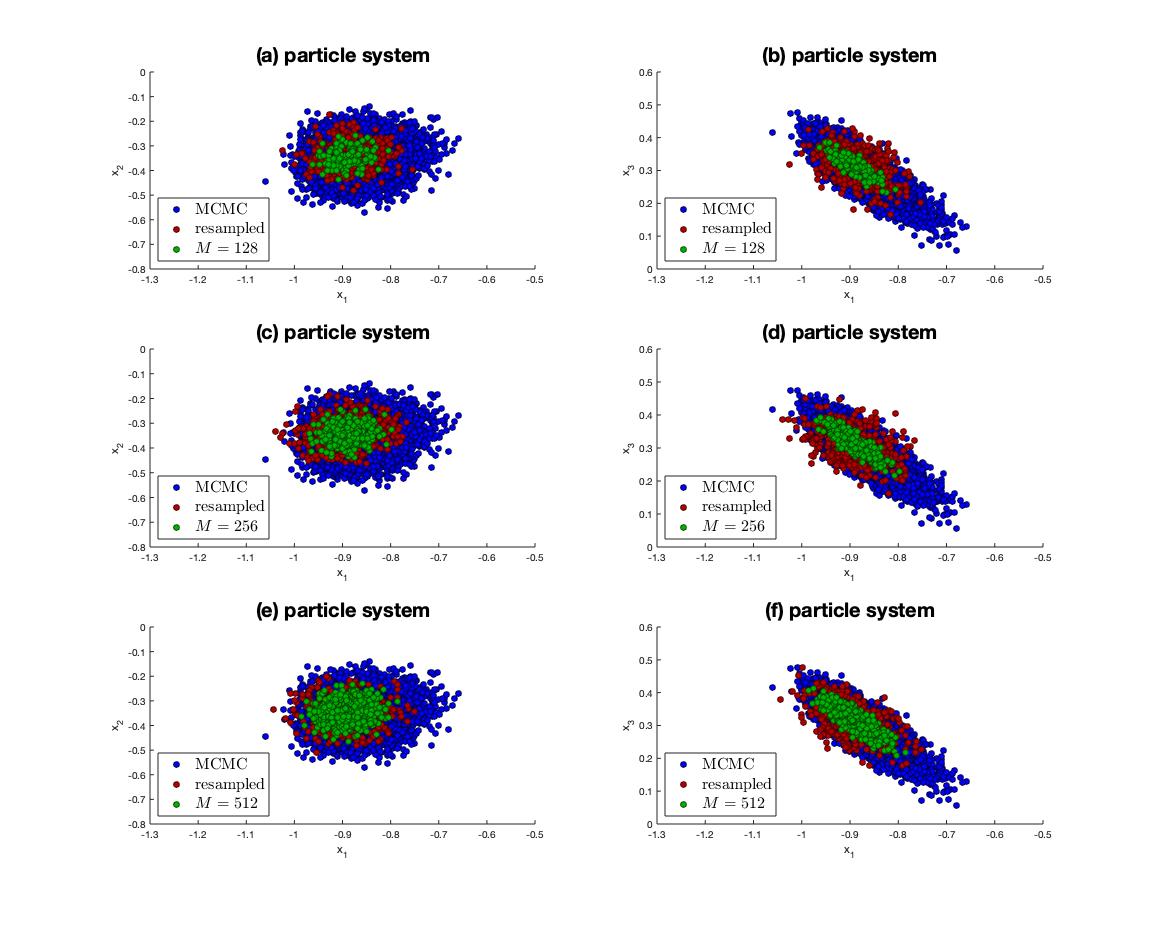}
    	\caption{{Approximations to two different marginals of the posterior PDF from deterministic Fokker--Planck dynamics for different values of
	the ensemble size $M$ and fixed kernel parameter $\alpha$: a) - b) $M = 128$, c) - d) $M = 256$, e) - f) $M = 512$.}}\label{fig:infdim_particle_system_different_M}
\end{figure}

\begin{figure}[!htb]
	\includegraphics[width=1\textwidth]{./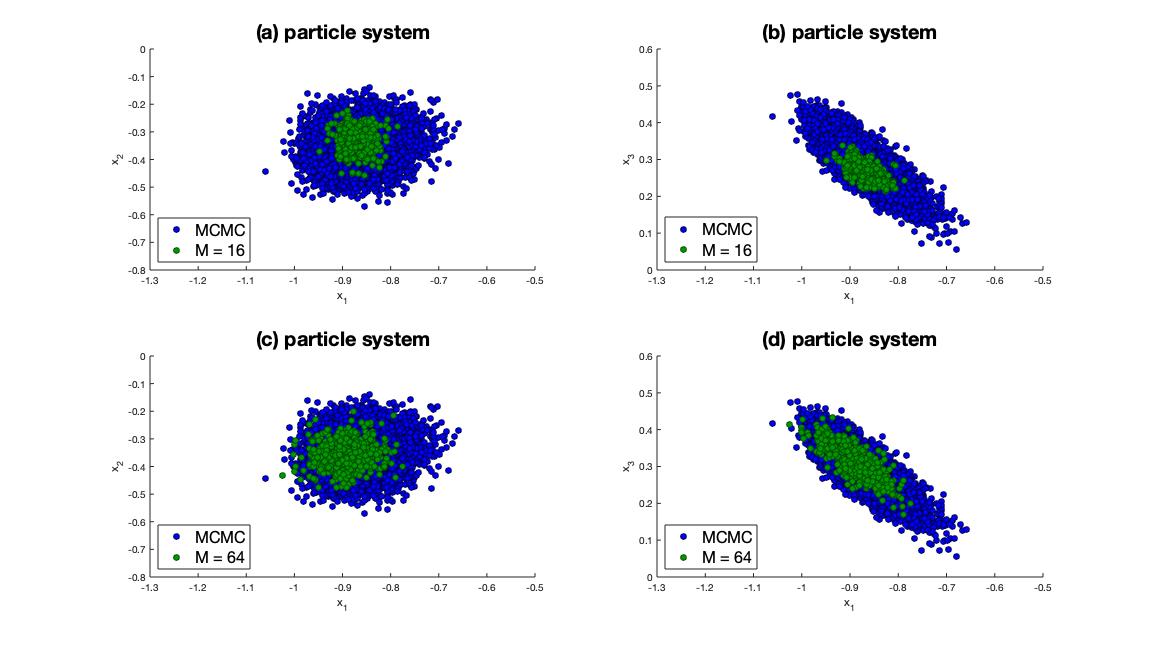}
    	\caption{{Approximations to two different marginals of the posterior PDF from interacting Langevin dynamics without correction for different values of
	the ensemble size $M$: a) - b) $M = 16$, c) - d) $M = 64$.}}\label{fig:infdim_EKS_sample_different_M}
\end{figure}
\begin{figure}[!htb]
	\includegraphics[width=1\textwidth]{./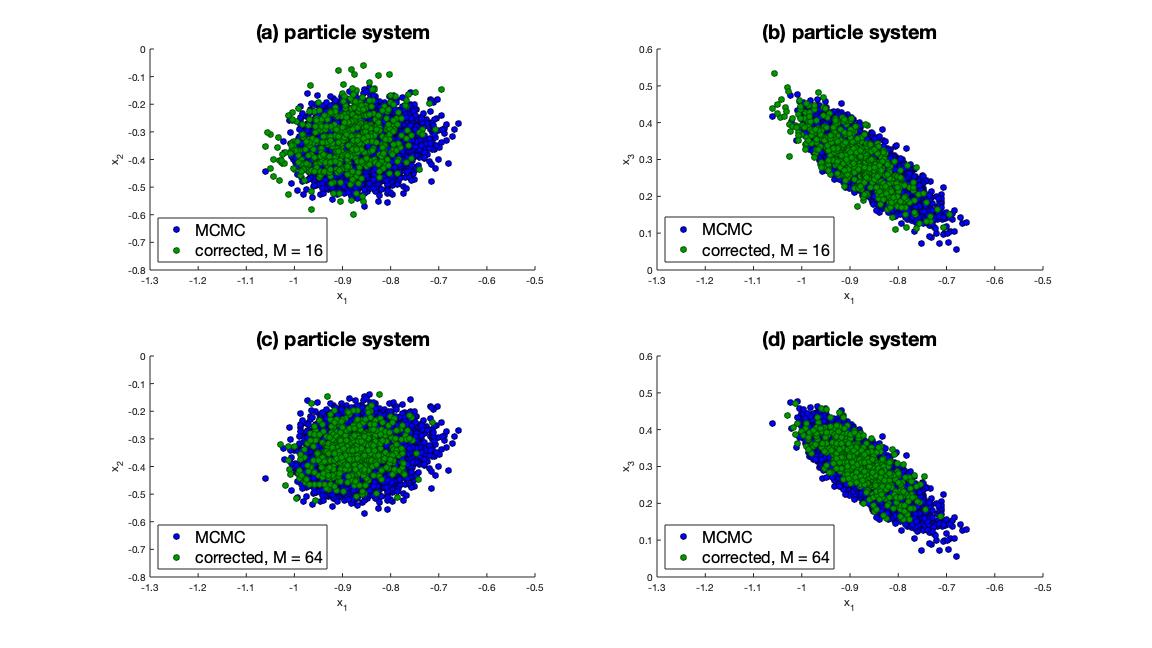}
    	\caption{{Approximations to two different marginals of the posterior PDF from interacting Langevin dynamics with correction for different values of
	the ensemble size $M$: a) - b) $M = 16$, c) - d) $M = 64$.}}\label{fig:infdim_EKS_corrected_sample_different_M}
\end{figure}

\begin{figure}[!htb]
	\includegraphics[width=1\textwidth]{./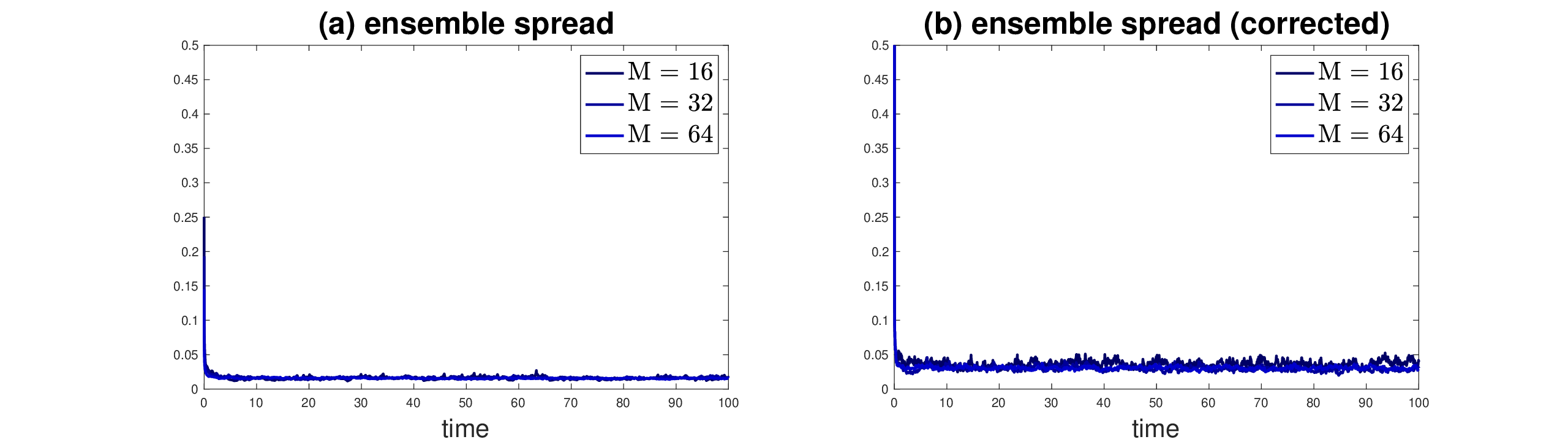}
    	\caption{Comparison of the spread of the ensemble from interacting Langevin dynamics with and without correction over time: a) without correction, b) with correction.}\label{fig:infdim_EKS_spread}
\end{figure}

\begin{figure}[!htb]
	\includegraphics[width=1\textwidth]{./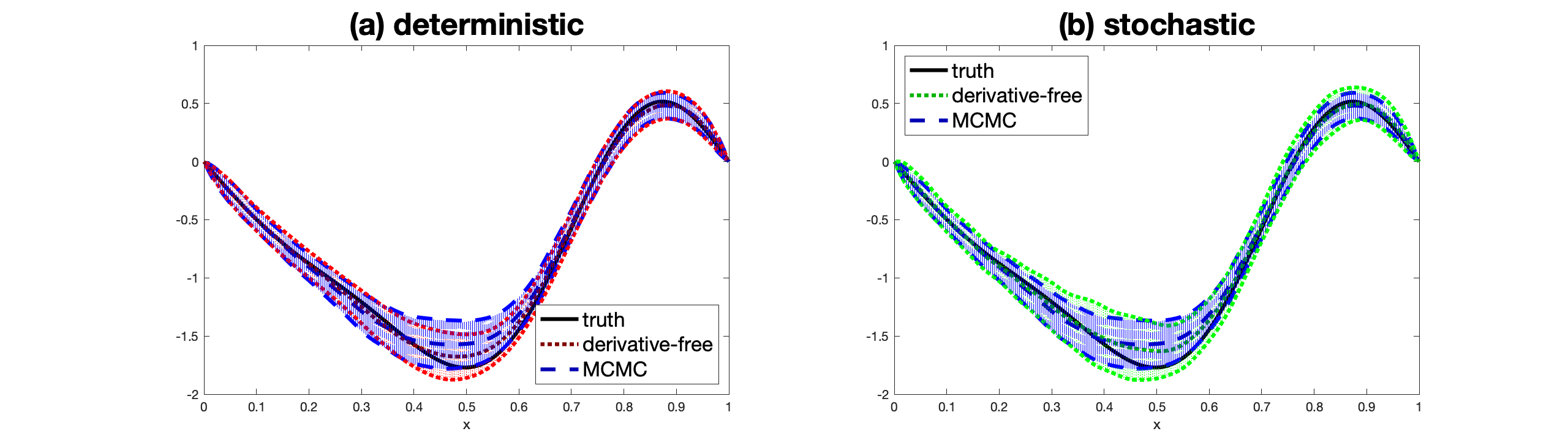}
    	\caption{{Approximations to the unknown parameter from: a) deterministic Fokker--Planck dynamics, b) interacting Langevin dynamics.}}\label{fig:infdim_parameter_est}
\end{figure}

\begin{figure}[!htb]
	\centering
	\includegraphics[width=0.5\textwidth]{./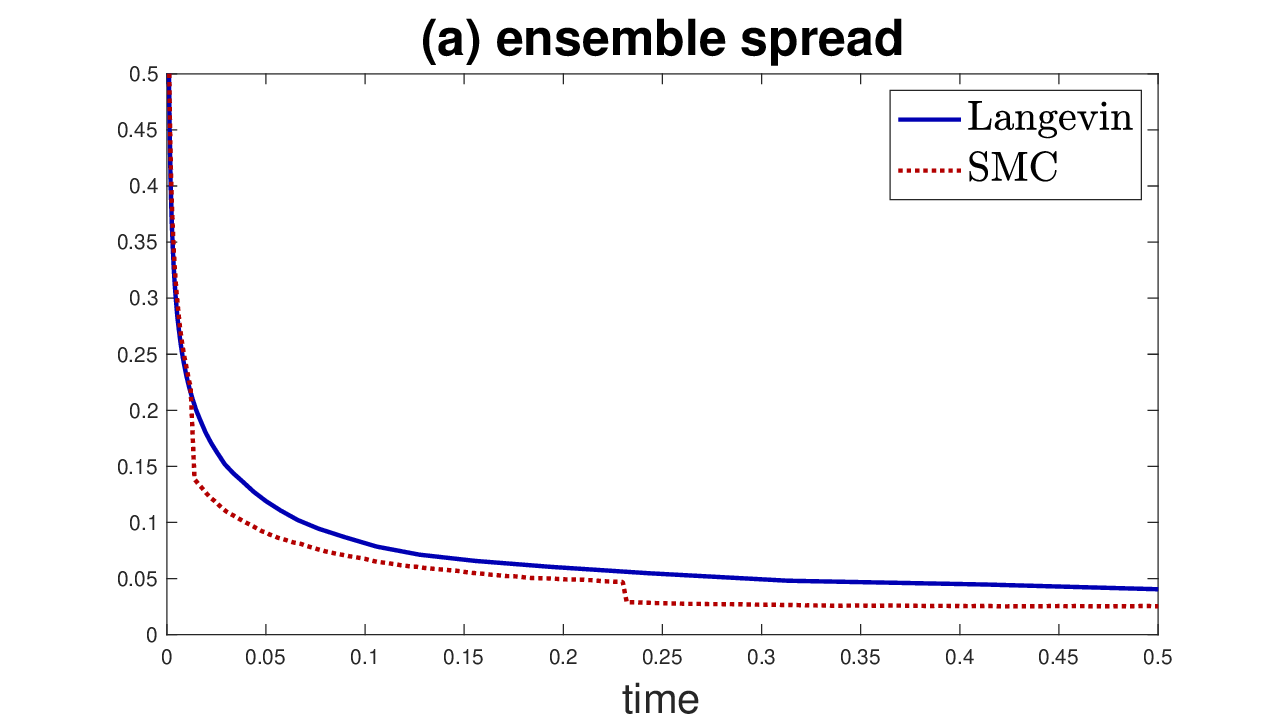}
	
    	\caption{{Comparison of the spread of the ensemble from the sequential Monte Carlo method and the interacting Langevin dynamics.}}\label{fig:infdim_SMC_spread}
\end{figure}

\begin{figure}[!htb]
	\includegraphics[width=1\textwidth]{./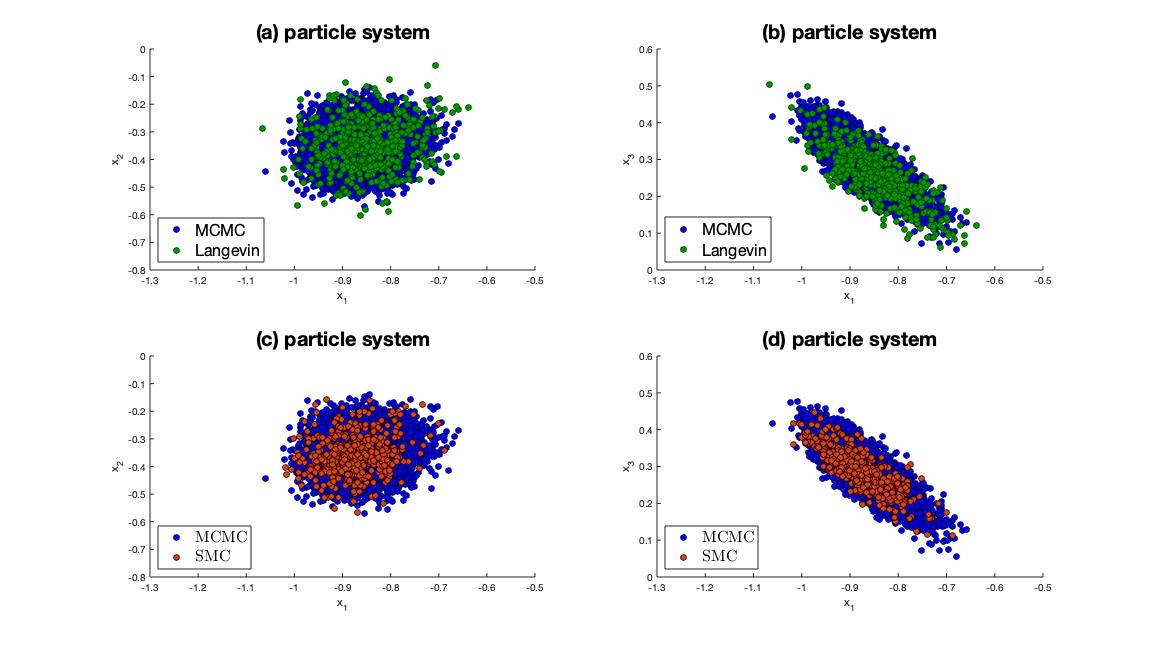}
    	\caption{{Approximations to two different marginals of the posterior PDF from sequential Monte Carlo method fixed ensemble size $M=512$: a)-b) interacting Langevin dynamics c)-d) sequential Monte Carlo method}}\label{fig:infdim_particle_system_SMC}
\end{figure}

\begin{figure}[!htb]
	\includegraphics[width=1\textwidth]{./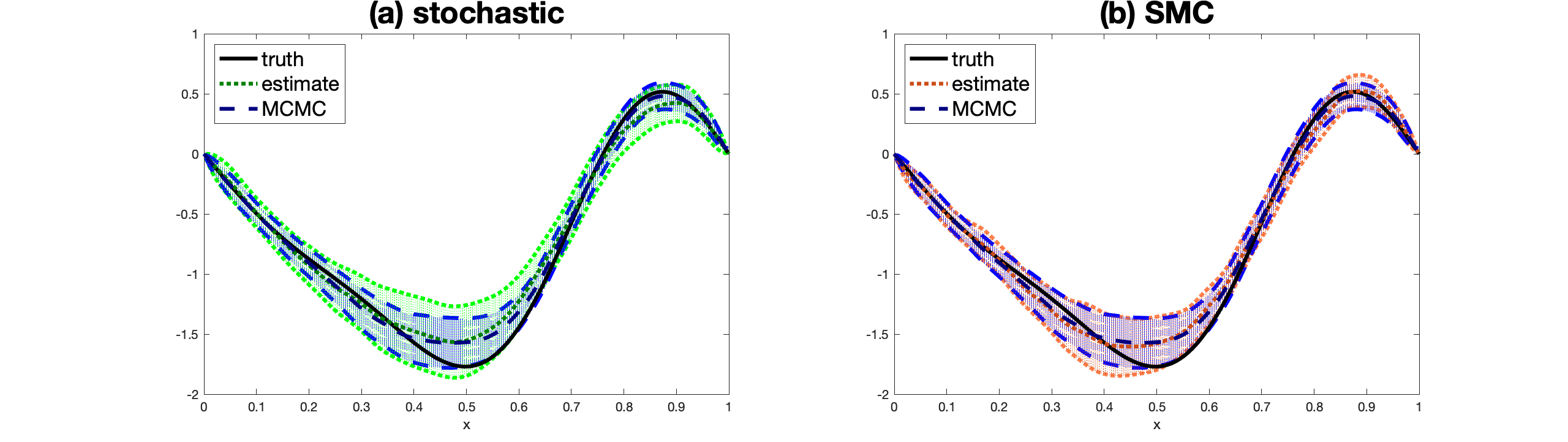}
    	\caption{{Approximations to the unknown parameter from: a) interacting Langevin dynamics, b) sequential Monte Carlo method.}}\label{fig:infdim_parameter_est_SMC}
\end{figure}


\section{Conclusions} \label{sec:conclusions}

We have discussed deterministic and stochastic interacting particle approximations to the Fokker--Planck formulation of 
overdamped Langevin dynamics (Brownian dynamics) and its application to BIPs. Preconditioning of such approximations by 
the empirical covariance matrix leads to affine invariant formulations and to the possibility of gradient-free implementations. 
We have proposed a localised preconditioning approach which allows for an application of such gradient-free formulations to BIPs with
multimodal posterior distributions. While the deterministic formulations depend crucially on the choice of the RKHS, the
stochastic formulations do not require such parametrisations; their convergence to equilibrium is however more difficult to track. {At the same time, the computational complexity of the underlying kernel-based potential and its gradient exceeds those arising from the stochastic formulations. Furthermore, kernel-based density approximations, which work well in low dimensions, are 
subject to the curse of dimensionality.}

{Further work is required on efficient time-stepping methods for the proposed interacting particle systems. Despite being
affine invariant, the equations of motion can be stiff, that is, it may require very small times-steps with an explicit time-stepping method, 
when initialised from a distribution that is far from the desired target distribution as it is often the case in Bayesian inference.} 

We finally mention that both the deterministic and stochastic interacting particle formulations can be combined with
stochastic gradient descent (SGD) \cite{sr:BCN18} and stochastic gradient Langevin dynamics (SGLD) \cite{sr:WT11}, respectively, methods. This aspect will be explored further in a forthcoming publication.


\medskip

\noindent
{\bf Acknowledgement.} This work has been partially funded by Deutsche Forschungsgemeinschaft (DFG, German Science Foundation) - SFB 1294/1 - 318763901. SW is grateful to the DFG RTG1953 "Statistical Modeling of Complex Systems and Processes" for funding of this research. The authors acknowledge the support by the state of Baden--W\"urttemberg through bwHPC. We like to thank Sahani Pathiraja and Claudia Schillings for feedback on earlier drafts of the paper.


\bibliographystyle{siam}
\bibliography{mybib}


\end{document}